\crefname{equation}{}{}
\crefname{equation}{}{} 
\numberwithin{equation}{section}
\newtheorem{theorem}{Theorem}[section]
\newtheorem{proposition}[theorem]{Proposition}
\newtheorem{lemma}[theorem]{Lemma}
\crefname{claim}{Claim}{Claims}
\newtheorem{conjecture}[theorem]{Conjecture}
\newtheorem*{question*}{Question}
\theoremstyle{definition}
\newtheorem{definition}[theorem]{Definition}
\newtheorem*{definition*}{Definition}
\theoremstyle{remark}
\newtheorem*{remark}{Remark}
\newcommand{\snorm}[1]{\lVert#1\rVert}
\newcommand{\sang}[1]{\langle #1 \rangle}
\newcommand{\sabs}[1]{\lvert#1\rvert}
\newcommand{\mb}{\mathbb}
\newcommand{\mbf}{\mathbf}
\newcommand{\mbm}{\mathbbm}
\newcommand{\mc}{\mathcal}
\newcommand{\mf}{\mathfrak}
\newcommand{\msf}{\mathsf}
\newcommand{\on}{\operatorname}
\DeclareMathOperator{\proj}{proj}
\DeclareMathOperator{\Gr}{Gr}
\title{The cylindrical width of transitive sets}
\author[Sah]{Ashwin Sah}
\author[Sawhney]{Mehtaab Sawhney}
\author[Zhao]{Yufei Zhao}
\thanks{Sah and Sawhney were supported by NSF Graduate Research Fellowship Program DGE-1745302. Zhao was supported by NSF Award DMS-1764176, a Sloan Research Fellowship, and the MIT Solomon Buchsbaum Fund.}
\address{Department of Mathematics, Massachusetts Institute of Technology, Cambridge, MA 02139, USA}
\email{\{asah,msawhney,yufeiz\}@mit.edu}
\begin{document}

\begin{abstract}
We show that for every $1 \le k \le d/(\log d)^C$, every finite transitive set of unit vectors in $\mb{R}^d$ lies within distance $O(1/\sqrt{\log (d/k)})$ of some codimension $k$ subspace, and this distance bound is best possible. 
This extends a result of Ben Green, who proved it for $k=1$.
\end{abstract}

\maketitle

\section{Introduction}\label{sec:introduction}

The following counterintuitive fact was conjectured by the third author and proved by Green~\cite{Gre20}. It says that every finite transitive subset of a high dimensional sphere is close to some hyperplane. 
Here a subset $X$ of a sphere in $\mb{R}^d$ is \emph{transitive} if for every $x,x' \in X$, there is some $g \in \msf{O}(\mb{R}^d)$ so that $g X = X$ and $gx = x'$. We say that $X$ has \emph{width} at most $2r$ if it lies within distance $r$ of some hyperplane. The finiteness assumption is important since otherwise the whole sphere is a counterexample. 

\begin{theorem}[Green~\cite{Gre20}]\label{thm:green}
Let $X$ be a finite transitive subset of the unit sphere in $\mb{R}^d$.
Then the width of $X$ is at most $O(1/\sqrt{\log d})$.
Furthermore, this upper bound is best possible up to a constant factor.
\end{theorem}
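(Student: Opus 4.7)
The plan is to exploit the following ``duality'' for transitive sets: writing $X = Gx_0$ for $G \le \mathsf{O}(\mathbb{R}^d)$ the symmetry group of $X$ (acting transitively on $X$), one has for any $v \in S^{d-1}$
\[
\max_{x \in X} |\langle x, v\rangle| = \max_{g \in G}|\langle x_0, gv\rangle| = \max_{u \in Gv}|\langle x_0, u\rangle|,
\]
so the width of $X$ in the direction $v$ is controlled by the geometry of the auxiliary transitive set $Gv$. This symmetry lets one trade the complexity of $X$ for that of a chosen test orbit, and will be the main lever.

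I would begin with standard reductions: if $\operatorname{span}(X) \subsetneq \mathbb{R}^d$ then $X$ already lies in a hyperplane, so I may assume $\operatorname{span}(X) = \mathbb{R}^d$; and any nonzero $G$-invariant vector $v$ yields constant $\langle x, v\rangle$ on $X$ and hence width $0$ in direction $v$, so I may assume the $G$-action on $\mathbb{R}^d$ has no trivial subrepresentation, equivalently $\sum_{x \in X} x = 0$. As a baseline I would then try a uniformly random $v \in S^{d-1}$: sub-Gaussianity of each $\langle x, v\rangle$ together with a union bound produce some deterministic $v$ achieving width $O\bigl(\sqrt{\log |X|/d}\bigr)$, which is already $O(1/\sqrt{\log d})$ whenever $\log |X| \lesssim d/\log d$.

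The main obstacle is the complementary regime $\log |X| \gg d/\log d$, where $G$ must be an enormous finite subgroup of $\mathsf{O}(\mathbb{R}^d)$ and a random direction is too weak. Here I would return to the duality: choose $v$ as an invariant of a proper subgroup $H \le G$ of small index, so that $|Gv| \le [G:H]$ and the relevant max collapses to a max over a much smaller set. The goal is to find $H$ and $v \in (\mathbb{R}^d)^H$ so that both $[G:H]$ is small and the projection of $x_0$ onto $\operatorname{span}(Gv)$ is small; a Schur's-lemma computation shows that when $\operatorname{span}(Gv)$ is a single isotypic component $V_i$, this projected squared norm equals $\lambda_i \dim V_i$, where $\lambda_i$ is the scalar by which the $G$-equivariant operator $\tfrac{1}{|X|}\sum_x xx^\top$ acts on $V_i$. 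Moreover $Gv$ is itself a transitive set in lower effective dimension, which suggests a recursive argument with roughly $\log d$ levels.

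The technical heart, and the step I expect to be hardest, is quantitatively producing such subgroups $H$ with useful invariant subspaces whenever $G$ is huge. I anticipate this needs the structure theory of finite linear groups---a Jordan-type bound on the index of an abelian normal subgroup, combined with explicit control of the irreducible constituents---so as to force the existence of sufficiently many low-index proper subgroups with nontrivial invariants in the ``small-$\lambda_i \dim V_i$'' isotypic pieces. The $1/\sqrt{\log d}$ rate should emerge from balancing the probabilistic and structural regimes at the crossover $|X| \sim \exp(d/\log d)$, and this balance point is presumably where the extremal examples asserted in the theorem live.
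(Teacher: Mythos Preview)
Your small-group case (random direction when $\log|X|\lesssim d/\log d$) matches the paper's treatment (Proposition~\ref{thm:few-points}). Beyond that, however, there is a genuine gap, and the mechanism you propose does not survive the hard case.

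The difficulty is precisely when $G$ acts irreducibly (indeed primitively) on $\mb{R}^d$. In that situation there is a single isotypic component, your Schur computation gives $\lambda\dim V=1$, and for any nonzero $v$ one has $\operatorname{span}(Gv)=\mb{R}^d$; so neither ``small $\lambda_i\dim V_i$'' nor ``lower effective dimension'' is available, and your recursion has no step to take. Finding a small-index $H$ with a useful invariant vector is exactly the content that is missing, and it cannot be extracted from soft representation theory alone. The paper (following Green) supplies this structure by a different route: it works with a probability measure on directions rather than a single $v$ (this is what makes the reductions compose), reduces first to the irreducible case via Maschke and then to the primitive case via a maximal system of imprimitivity, and for primitive $G$ with $[G:Z]>e^{d/\log d}$ invokes a hard group-theoretic input (Green/Collins, ultimately relying on the classification of finite simple groups) that forces a large alternating normal subgroup. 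That alternating piece is then handled by an explicit delocalized-subspace construction, not by recursion. Your proposal correctly anticipates that Jordan-type structure is needed, but the specific lever---an alternating subgroup and the permutation model---is what actually closes the argument, and nothing in your outline produces it. You also do not address the sharpness claim; the matching lower bound comes from the orbit of $(1,1/\sqrt{2},\ldots,1/\sqrt{d})/\sqrt{H_d}$ under coordinate permutations and sign changes.
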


The bound in the theorem is tight since the set $X$ obtained by taking all permutations and coordinate-wise $\pm$ signings of the unit vector $(1, 1/\sqrt{2}, \ldots, 1/\sqrt{d})/\sqrt{H_d}$, where $H_d = 1 + 1/2 + \cdots + 1/d \sim \log d$, has width on the order of $1/\sqrt{\log d}$.

Green's proof uses a clever induction scheme along with sophisticated group theoretic arguments, including an application of the classification of finite simple groups.

We generalize Green's result by showing that a finite transitive set lies not only near some hyperplane, but in fact it lies near a subspace of codimension $k$, as long as $k$ is not too large.

We say that $X \subset \mb{R}^d$ has \emph{$k$-cylindrical width} at most $2r$ if $X$ lies within distance $r$ of some affine codimension $k$ subspace. The case $k=1$ corresponds to the usual notion of width. 
Our main result below implies that every finite transitive subset of the unit sphere in $\mb{R}^d$ has $k$-cylindrical width $O(1/\sqrt{\log(d/k)})$ as long as $k$ is not too large.

\begin{theorem}\label{thm:real}
There is an absolute constant $C > 0$ so that the following holds.
Let $1 \le k \le d/(\log (3d))^C$.
Let $X$ be a finite transitive subset of the unit sphere in $\mb{R}^d$.
Then there is a real $k$-dimensional subspace $W$ such that
\[
\sup_{\mbf{x}\in X}\snorm{\proj_W\mbf{x}}_2
\lesssim 
\frac{1}{\sqrt{\log(d/k)}}.
\]
\end{theorem}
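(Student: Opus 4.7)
The plan is to induct on $k$, using \cref{thm:green} as the base case.
Let $G \leq \mathsf{O}(\mathbb{R}^d)$ be a group acting transitively on $X$, and form the second-moment matrix $M := \mathbb{E}_{x \in X}[xx^\top]$.
Since $M$ commutes with $G$, its eigenspaces are $G$-invariant; decompose $\mathbb{R}^d = V_1 \oplus \cdots \oplus V_s$ into eigenspaces with eigenvalues $\lambda_1 \leq \cdots \leq \lambda_s$ and dimensions $m_i = \dim V_i$.
Transitivity yields the key identity $\snorm{\proj_{V_i} x}_2^2 = m_i \lambda_i$, constant over $x \in X$.

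Let $V := V_1$ be the smallest-eigenvalue eigenspace, with eigenvalue $\lambda$ and dimension $m$.
If $m < k$, set $W := V \oplus W'$ where $W' \subset V^\perp$ comes from the inductive hypothesis (with parameter $k-m$) applied to the rescaled transitive set $\proj_{V^\perp}(X)/\sqrt{1 - m\lambda}$ on the unit sphere of $V^\perp \cong \mathbb{R}^{d-m}$; the estimate becomes $\snorm{\proj_W x}_2^2 = m\lambda + \snorm{\proj_{W'} x}_2^2 \leq m\lambda + O(1/\log((d-m)/(k-m)))$.
If instead $m \geq k$, apply the inductive hypothesis inside $V$ (at dimension $m$, same $k$) to the rescaled set $\proj_V(X)/\sqrt{m\lambda}$, producing $W \subset V$ of dimension $k$ with $\sup_{x \in X} \snorm{\proj_W x}_2 \lesssim \sqrt{m\lambda/\log(m/k)}$.

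The main obstacle is closing the induction with the correct $\log(d/k)$ scaling.
A naive peeling that uses only the eigenvalue bound $\lambda \leq O(1/\log d)$ from \cref{thm:green} accumulates an error of order $k/\log d$, which is much worse than the target $1/\log(d/k)$ once $k$ grows beyond a constant.
To close this gap one must apply \cref{thm:green} in a finer way, for instance exploiting that the restricted transitive set on each $G$-invariant eigenspace of $M$ has Gram matrix proportional to the identity, so that \cref{thm:green} applied within the eigenspace can produce bounds whose dimension term is intrinsic to the eigenspace rather than the ambient $d$.
Moreover, the tight extremal examples, which look like block-structured permutation-and-sign sets on $S^{d-1}$ with blocks of size $k$, suggest that the optimal $W$ is typically not $G$-invariant but instead supported on the fixed set of a proper subgroup of $G$; matching the correct scaling then likely requires the kind of group-theoretic decomposition arguments and classification-of-finite-simple-groups tools used in Green's original proof.
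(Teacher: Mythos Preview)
Your proposal is not a proof: by your own admission, the induction does not close, and the final paragraph is speculation rather than argument. But the difficulty is sharper than you indicate. The second-moment recursion stalls precisely when $G$ acts irreducibly on $\mathbb{R}^d$: then $M=\tfrac{1}{d}I$, there is a single eigenspace $V_1=\mathbb{R}^d$ with $m=d$ and $\lambda=1/d$, and both of your cases degenerate to the original problem. Reducing to $G$-invariant subspaces cannot help here, since there are none except $0$ and $\mathbb{R}^d$. The irreducible case is not a boundary nuisance but the core of the problem; any scheme that only probes $G$-invariant subspaces will miss it entirely.

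The paper does not attempt a direct real induction. It first proves the complex analogue (\cref{thm:complex}), which is where all the structural work lives: a reduction from general $G$ to irreducible representations (\cref{prop:reduction-irred}), then from irreducible to primitive and to an explicit ``alternating'' permutation problem (\cref{prop:reduction-prim}), an analysis of that permutation problem via delocalized subspaces and Selberg's inequality (\cref{prop:permutation}), and finally the primitive case via Green's classification-based structure theorem (\cref{prop:f-primitive}). Only after \cref{thm:complex} is in hand does the paper deduce \cref{thm:real}, and it does so by a short linear-algebraic argument: take a $2k$-dimensional complex subspace from \cref{thm:complex}, form the $d\times 4k$ real matrix of real and imaginary parts of a unitary basis, and apply a restricted invertibility theorem (\cite[Theorem~6]{NY17}) to extract $k$ well-conditioned real columns. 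The resulting real $k$-plane inherits the projection bound up to constants. Your instinct that deep group theory is unavoidable is correct, but it enters upstream in the complex statement, not in the passage from complex to real.
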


Here and throughout $a \lesssim b$ means that $a \le C' b$ for some absolute constant $C'$. We write $\snorm{\mbf{x}}_2$ for the usual Euclidean norm of a vector $\mbf{x}$. 
Also $\proj_W$ is the orthogonal projection onto $W$.

We deduce the above theorem from a complex version using a theorem on restricted invertibility  (see \cref{sec:real}).
A transitive subset of the complex unit sphere is defined to be the orbit of a point under the action of some subgroup of the unitary group.

\begin{theorem}\label{thm:complex}
There is an absolute constant $C > 0$ so that the following holds.
Let $1 \le k \le d/(\log (3d))^C$.
Let $X$ be a finite transitive subset of the unit sphere in $\mb{C}^d$.
Then there is a complex $k$-dimensional subspace $W$ such that
\[
\sup_{\mbf{x}\in X}\snorm{\proj_W\mbf{x}}_2
\lesssim 
\frac{1}{\sqrt{\log(d/k)}}.
\]
\end{theorem}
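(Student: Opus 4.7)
The plan is to mirror Green's strategy for \cref{thm:green} and promote it from one to $k$ directions. Write $X = G x_0$ where $G$ is a subgroup of the unitary group of $\mb{C}^d$, and define the averaging operator
\[
M = \tfrac{1}{\sabs{X}} \sum_{x \in X} x x^*,
\]
which commutes with $G$ by transitivity. The key identity is that for any $G$-invariant subspace $W$,
\[
\sup_{x \in X} \snorm{\proj_W x}_2^2 \;=\; \snorm{\proj_W x_0}_2^2 \;=\; \on{Tr}(P_W M),
\]
since $G$ acts by isometries preserving $W$. Thus whenever $G$ admits enough invariant subspaces, it suffices to produce a $G$-invariant $W$ of dimension $k$ with small trace against $M$.

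Decompose $\mb{C}^d = \bigoplus_i V_i$ into $G$-isotypic components, where $V_i \cong \rho_i \otimes \mb{C}^{m_i}$ for an irreducible $\rho_i$ of dimension $d_i$ with multiplicity $m_i$. By Schur's lemma, $M|_{V_i} = I_{d_i} \otimes A_i$ for a positive semidefinite operator $A_i$ on $\mb{C}^{m_i}$. The $G$-invariant subspaces of $V_i$ are exactly $\rho_i \otimes U_i$ for $U_i \subset \mb{C}^{m_i}$, so any $G$-invariant $W$ has the form $\bigoplus_i (\rho_i \otimes U_i)$, with dimension $\sum_i d_i \dim U_i$ and trace $\sum_i d_i \on{Tr}(A_i P_{U_i})$ against $M$. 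Choosing each $U_i$ to span the bottom eigenvectors of $A_i$ reduces the task to a weighted knapsack: select eigenvalues of the $A_i$ so that the total $d_i$-weighted count of selections is at least $k$ while minimizing the corresponding total weighted eigenvalue sum. Since $\sum_i d_i \on{Tr}(A_i) = \on{Tr}(M) = 1$ while the total available weighted dimension is $d$, and since $k \log(d/k) \leq d$ always, a greedy argument recovers the required bound comfortably in the regime where the $d_i$ are small and the multiplicities are sufficient.

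The principal obstacle is the irreducible (or near-irreducible) case, where the isotypic decomposition has few blocks with large $d_i$ and small $m_i$, so that no nontrivial $G$-invariant subspace of the desired dimension is available and the identity above becomes vacuous. In this regime one must construct a non-$G$-invariant subspace $W$ and control $\sup_{x \in X} \snorm{\proj_W x}_2^2$ directly. The natural strategy is iterative: apply \cref{thm:green} (in its complex formulation) to obtain a single direction $v_1$ with $\sup_x \sabs{\sang{v_1, x}} \lesssim 1/\sqrt{\log d}$, and then extract subsequent directions by restricting attention to an approximately transitive sub-orbit in the orthogonal complement, or by applying Green's theorem inside a judiciously chosen smaller $G$-invariant ambient space in which the target bound $1/\sqrt{\log(d/k)}$ appears naturally. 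The central technical difficulty, and the source of the hypothesis $k \leq d/(\log(3d))^C$, is to arrange this iteration so that the final bound scales as $1/\sqrt{\log(d/k)}$ rather than the naive $\sqrt{k}/\sqrt{\log d}$ that would come from $k$ independent applications of Green. Following Green's approach, the iteration is controlled using structural information on the image of $G$ in the projective unitary group, ultimately drawing on the classification of finite simple groups together with careful bookkeeping of the dimension consumed at each step.
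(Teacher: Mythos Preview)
Your split into a reducible regime (handled via the trace identity $\sup_{x\in X}\snorm{\proj_W x}_2^2 = \on{Tr}(P_W M)$ for $G$-invariant $W$) and an irreducible regime is the right dichotomy, and your treatment of the first regime is a pleasant alternative to what the paper does there. The paper instead works with a probability measure $\mu$ on $\Gr_{\mb{C}}(k,d)$, independent of the orbit point, and reduces to the irreducible case by either assembling $\mu$ from measures on large irreducible summands or, if there are many small summands, taking a random $k$-subset of unit vectors drawn from distinct summands. Your eigenvalue-selection argument is cleaner when it applies, but note it already becomes awkward once every $d_i$ exceeds $k$, which is still far from irreducible.

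The genuine gap is the irreducible (in fact primitive) case, which carries essentially all the content, and here your proposal is not a proof. You suggest iterating the $k=1$ theorem, correctly observe that naive iteration yields $\sqrt{k/\log d}$ rather than $1/\sqrt{\log(d/k)}$, and then defer the repair to unspecified ``careful bookkeeping''. Neither of your two suggested mechanisms works: in the irreducible case there is no smaller $G$-invariant ambient space to descend into, and passing to the orthogonal complement of $v_1$ destroys both $G$-invariance and transitivity, so there is no ``approximately transitive sub-orbit'' to iterate on. The paper does \emph{not} proceed by iteration. Its route is: (i) for the full symmetric/alternating action on $\mb{C}^n$, build in one shot a random $k$-dimensional subspace with the correct bound, via a tailored norm $\snorm{\cdot}_T$, Gaussian concentration producing delocalized subspaces at each dyadic scale, and a Selberg-type inequality to combine scales; (ii) reduce an arbitrary primitive $G$ to this alternating model using Green's structural input (from CFSG) that a primitive group with large projective image contains a large alternating normal subgroup acting through its standard representation on a tensor factor. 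Step (i) is the new idea your proposal is missing.
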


We suspect that the $1 \le k \le d/(\log (3d))^C$ hypothesis is unnecessary in both \cref{thm:real,thm:complex}.



\begin{conjecture}\label{conj:thin-subspace}
Let $1\le k\le d$. Let $X$ be a finite transitive subset of the unit sphere in $\mb{C}^d$.
Then there is a complex $k$-dimensional subspace $W$ such that
\[
\sup_{\mbf{x}\in X}\snorm{\proj_W\mbf{x}}_2
\lesssim 
\frac{1}{\sqrt{\log(2d/k)}}.
\]
\end{conjecture}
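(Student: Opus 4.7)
The plan splits the parameter range into three pieces. For $k > d/2$, the target $1/\sqrt{\log(2d/k)} \gtrsim 1/\sqrt{\log 4}$ is a positive constant, so any $k$-dimensional subspace $W$ trivially satisfies $\snorm{\proj_W \mbf{x}}_2 \le 1$. For $k \le d/(\log(3d))^C$, the conjecture coincides with \cref{thm:complex}. So the focus is the intermediate range $d/(\log(3d))^C < k \le d/2$, where the target varies between $\Theta(1/\sqrt{\log\log d})$ and $\Theta(1)$.

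In this range, my approach is to iterate \cref{thm:complex} in an $O(\log \log d)$-step doubling scheme. Let $G \le \msf{U}(d)$ be the group acting transitively on $X$. Given a $G$-invariant subspace $W_j$ of dimension $k_j$ satisfying $\snorm{\proj_{W_j}\mbf{x}}_2 \le \epsilon_j$, one projects and renormalizes $X$ onto $W_j^\perp$ to obtain a transitive set $X_j' \subset W_j^\perp$ on the unit sphere, then applies \cref{thm:complex} within $W_j^\perp$ to produce an additional $G$-invariant subspace $V_j \perp W_j$ of dimension roughly $k_j$ with small projections. Setting $W_{j+1} = W_j \oplus V_j$ approximately doubles the dimension while controlling the projection bound; after $j^* \sim \log_2(k/k_0)$ steps one arrives at a subspace of dimension at least $k$ with the conjectured projection bound.

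The key technical requirement is that each $W_j$ be $G$-invariant, so that the projected set $X_j'$ remains transitive. This requires modifying \cref{thm:complex} to output $G$-invariant subspaces, which can be arranged via the $G$-isotypic decomposition $\mb{C}^d = \bigoplus_j U_j \cong \bigoplus_j (\rho_j \otimes \mb{C}^{n_j})$: by transitivity the masses $\alpha_j = \snorm{\proj_{U_j}\mbf{x}}_2^2$ are independent of $\mbf{x} \in X$, and $G$-invariant subspaces inside $U_j$ have the form $\rho_j \otimes L$ for $L \subset \mb{C}^{n_j}$. A knapsack-type greedy selection over the eigenvalue atoms of the Gram matrices $M_j$ on the multiplicity spaces $\mb{C}^{n_j}$ (each of trace $\alpha_j$), ordered by mass per dimension, should yield a $G$-invariant $W$ of approximately the target dimension with $\snorm{\proj_W\mbf{x}}_2^2 \lesssim k/d$. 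Combined with the elementary inequality $(k/d)\log(2d/k) \le 2/e$ for $k \in (0,d]$, this matches the conjectured bound.

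The main obstacle is the irreducible case: when $G$ acts irreducibly on $\mb{C}^d$, the isotypic decomposition is trivial and the knapsack reduction offers no leverage. Here Schur's lemma forces $X$ to be a tight frame, $\sum_{\mbf{x} \in X} \mbf{x}\mbf{x}^* = (|X|/d)\, I$, so the average of $\snorm{\proj_W\mbf{x}}_2^2$ over $\mbf{x}\in X$ equals $k/d$ for every $k$-dimensional $W$, matching the target in expectation. But a naive Haar-random choice of $W$ does not survive the union bound over $X$, whose cardinality can be super-polynomial in $d$ (e.g.\ $|X|$ is of order $2^d d!$ in the extremal permutation example). A full resolution of the irreducible case therefore appears to require either a strengthening of the restricted invertibility tool underlying \cref{thm:complex} to remove the $(\log 3d)^C$ restriction, or genuinely new input combining Green's group-theoretic induction (which uses the classification of finite simple groups) with finer information on fixed vectors of stabilizers in irreducible unitary representations.
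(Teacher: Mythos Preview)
This statement is \cref{conj:thin-subspace}, which the paper explicitly leaves open; the paper proves only the restricted range $k \le d/(\log 3d)^C$ (\cref{thm:complex}). There is no proof in the paper to compare against.

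Your proposal is not a proof, and you correctly locate the gap yourself: the doubling iteration requires each intermediate $W_j$ to be $G$-invariant so that the projected set $X_j'$ remains transitive, but when $G$ acts irreducibly on $\mb{C}^d$ the only $G$-invariant subspaces are $\{0\}$ and $\mb{C}^d$, so neither the iteration nor the isotypic knapsack can even begin. This is not a corner case one can patch around: the paper's entire reduction apparatus (\cref{prop:reduction-irred,prop:reduction-prim,prop:f-primitive}) exists precisely to handle irreducible and primitive representations, and the $(\log d)^C$ loss in the hypothesis of \cref{thm:complex} originates there. Two smaller points reinforce this. Even in reducible cases, $G$-invariant subspaces have dimensions that are sums of the $\dim\rho_j$, so if every irreducible constituent has dimension exceeding $k$ there is still no nonzero $G$-invariant $W$ of dimension at most $k$. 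And the iteration is logically redundant with the knapsack step: if the knapsack really produced a $G$-invariant $W$ of dimension $\approx k$ with $\snorm{\proj_W\mbf{x}}_2^2 \lesssim k/d$, that already yields the conjecture directly since $(k/d)\log(2d/k)=O(1)$, with no iteration needed; whereas if you instead iterate \cref{thm:complex} with its own bound $1/\log(d/k_j)$ at each doubling step, the accumulated squared error $\sum_j 1/\log(d/2^j k_0)$ is a harmonic-type sum of order $\log\bigl(\log(d/k_0)/\log(d/k)\bigr)$, which does not stay below $1/\log(2d/k)$ across the whole intermediate range.
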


One particularly intriguing special case of \cref{conj:thin-subspace} is that every finite transitive set of unit vectors in $\mb{R}^d$ has $k$-cylindrical width $o(1)$ for all $k=o(d)$.

We prove a matching lower bound on the cylindrical radius (See \cref{sec:optimality} for proof.)

\begin{theorem}\label{thm:lower-bound}
Let $1\le k\le d$. There exists a transitive set $X$ in $\mb{R}^d$ such that for any (real or complex) $k$-dimensional subspace $W$ we have
\[\sup_{\mbf{x}\in X}\snorm{\proj_W\mbf{x}}_2
\gtrsim \frac{1}{\sqrt{\log(2d/k)}}.\]
\end{theorem}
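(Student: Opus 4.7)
The plan is to take the extremal transitive set to be Green's signed-permutation orbit $X = B_d \cdot v_0$, where $v_0 = (1, 1/\sqrt{2}, \ldots, 1/\sqrt{d})/\sqrt{H_d}$ and $B_d$ is the hyperoctahedral group of signed permutations acting on $\mathbb{R}^d$. An elementary harmonic-sum estimate shows that $H_k \log(2d/k) \gtrsim H_d$ uniformly for $1 \le k \le d$ (by examining the endpoints $k=1$ and $k=d$ and locating the interior minimum near $k \asymp \sqrt{d}$); equivalently, $H_k/H_d \gtrsim 1/\log(2d/k)$. Hence it suffices to establish $\sup_{x \in X}\snorm{\proj_W x}_2^2 \gtrsim H_k/H_d$ for every $k$-dimensional subspace $W \subset \mathbb{R}^d$.

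For an axis-aligned $W = \operatorname{span}(e_{i_1}, \ldots, e_{i_k})$ the rearrangement inequality gives equality $\sup_{x \in X}\snorm{\proj_W x}_2^2 = \sum_{i=1}^k 1/(iH_d) = H_k/H_d$, attained by the signed permutation that places the top $k$ entries of $v_0$ at the coordinates $i_1, \ldots, i_k$. For a general $k$-dimensional $W$, write $\proj_W = FF^T$ for an orthonormal $d \times k$ frame $F$ and parameterize elements of $X$ as $x = \epsilon v_{0,\sigma}$ with $\sigma \in S_d$ and $\epsilon \in \{\pm 1\}^d$, so that
\[
\sup_{x \in X}\snorm{\proj_W x}_2^2 = \max_{\sigma, \epsilon}\epsilon^T D_{v_{0,\sigma}}\proj_W D_{v_{0,\sigma}}\epsilon.
\]
The strategy is to first choose $\sigma$ via rearrangement to pair the largest values of $v_0^2$ with the largest diagonal entries of $\proj_W$, and then choose $\epsilon$ to maximize the Rademacher quadratic form. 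Simply averaging over $\epsilon$ already yields the trace lower bound $\sum_i v_{0,(i)}^2 (\proj_W)_{[i]}$ (both sequences sorted in decreasing order). By Schur--Horn majorization of the diagonal of a rank-$k$ projection by $(1^k, 0^{d-k})$, this trace bound is comparable to $H_k/H_d$ whenever the diagonal of $\proj_W$ is sufficiently concentrated near the extremal pattern. In the opposite regime, where $\operatorname{diag}(\proj_W)$ is spread (for instance uniformly close to $k/d$), the identity $\snorm{\proj_W}_F^2 = k$ forces substantial off-diagonal Frobenius mass, and a Rademacher-chaos argument with carefully chosen signs is used to extract this off-diagonal contribution.

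The main obstacle is the spread-diagonal regime, where one must convert the off-diagonal Frobenius mass into a genuine lower bound on the Rademacher quadratic form through a sign-rounding step. A natural tool is Grothendieck's inequality, which lower bounds $\max_{\epsilon \in \{\pm 1\}^d}\epsilon^T M \epsilon$ for $M = D_{v_{0,\sigma}}\proj_W D_{v_{0,\sigma}}$ by a universal constant times the corresponding SDP value $\max_{X \succeq 0,\, X_{ii}=1}\operatorname{tr}(MX)$; one then analyzes this SDP value using the spectral structure of $M$, exploiting the harmonic weighting of $v_0$ (which controls $\lambda_{\max}(M) \le 1/H_d$) together with the rank-$k$ constraint on $\proj_W$ to produce the required $H_k/H_d$ lower bound. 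An alternative route is a direct greedy construction of $x$, iteratively adjusting the permutation and signs to pick up contributions from the principal directions of $\proj_W$ beyond the trace.
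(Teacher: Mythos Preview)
Your central reduction is to the claim that, for the hyperoctahedral orbit $X$ of $v_0=(1,1/\sqrt{2},\ldots,1/\sqrt{d})/\sqrt{H_d}$, one has $\sup_{x\in X}\snorm{\proj_W x}_2^2\gtrsim H_k/H_d$ for \emph{every} $k$-dimensional subspace $W$. This intermediate target is false. Your inequality $H_k/H_d\gtrsim 1/\log(2d/k)$ is correct but very lossy in the middle range: at $k=\lfloor\sqrt{d}\rfloor$ one has $H_k/H_d\to 1/2$ while $1/\log(2d/k)\asymp 1/\log d\to 0$. Since $X$ is itself a finite transitive set, the paper's upper bound (\cref{thm:real}) supplies a $k$-dimensional $W$ with $\sup_{x\in X}\snorm{\proj_W x}_2^2\lesssim 1/\log(d/k)$; at $k=\lfloor\sqrt{d}\rfloor$ this is $O(1/\log d)$, far below your claimed lower bound of order $1/2$. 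The axis-aligned subspaces on which you verified the value $H_k/H_d$ are simply not the minimizers of $\sup_{x\in X}\snorm{\proj_W x}_2$ over $W$.

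Consequently no Grothendieck/SDP or greedy-sign argument in the ``spread diagonal'' case can rescue the plan, because the target you set is unattainable; those parts of the proposal were in any case only gestured at. The paper's proof avoids this trap by letting the transitive set depend on $k$. For $k\le d^{1-c}$ it reduces to the already-known $k=1$ case by passing to a single line inside $W$. For $k\ge\sqrt{d}$ it uses the $k$-dependent vector $\mbf{a}$ with entries $a_i=1/\sqrt{\lfloor k/2\rfloor+i}$ (and $\lfloor k/2\rfloor$ trailing zeros), whose decay profile is matched to the a priori constraint $\sigma_i\le 1/\sqrt{\max(i,k)}$ on the sorted values $\sigma_i=(\mb{E}_{y\in\mb{S}(W)}y_i^2)^{1/2}$. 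This matching yields the one-line first-moment bound $\mb{E}_y\sum_i a_i|y_i|\gtrsim\sum_i a_i\sigma_i\gtrsim\sum_{i\le d-k/2}\sigma_i^2\ge 1/2$, with no case split and no Rademacher-chaos machinery.
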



We propose another closely related conjecture: every finite transitive set in $\mb{R}^d$ lies inside a small cube.

\begin{conjecture}\label{conj:thin-basis}
Let $X$ be a finite transitive subset of the unit sphere in $\mb{R}^d$ (or $\mb{C}^d$).
Then there is a unitary basis $L$ such that
\begin{equation}\label{eq:thin-basis}
\sup_{\mbf{x}\in X,\mbf{v}\in L}|\sang{\mbf{v},\mbf{x}}|\lesssim \frac{1}{\sqrt{\log d}}.
\end{equation}
\end{conjecture}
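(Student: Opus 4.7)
The plan is to combine representation-theoretic structure with \cref{thm:complex}. Let $G \le \msf{U}(d)$ be a group acting transitively on $X$, and decompose $\mb{C}^d = \bigoplus_j V_j$ into irreducible $G$-submodules. Since each $V_j$ is $G$-invariant, transitivity forces $a_j := \snorm{\proj_{V_j}\mbf{x}}_2^2$ to be constant on $X$, with $\sum_j a_j = 1$. The rescaled projection $Y_j := \proj_{V_j}(X)/\sqrt{a_j}$ is then a transitive set on the unit sphere of $V_j$ under the induced $G$-action.

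Granting \cref{conj:thin-basis} within each $V_j$ of dimension $d_j$, one obtains an orthonormal basis $\{\mbf{v}_{j,i}\}$ of $V_j$ with $|\sang{\mbf{v}_{j,i},\mbf{y}}| \lesssim 1/\sqrt{\log d_j}$ for $\mbf{y} \in Y_j$. Concatenating these bases gives an orthonormal basis of $\mb{C}^d$ satisfying $|\sang{\mbf{v}_{j,i},\mbf{x}}| \le \sqrt{a_j/\log d_j}$ for every $\mbf{x} \in X$. This is $\lesssim 1/\sqrt{\log d}$ exactly when $d_j \ge d^{a_j}$; whenever every irreducible component satisfies this inequality, the original problem reduces cleanly to its irreducible version.

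To handle the remaining case where $d_j < d^{a_j}$ for some $j$, I would mix dimensions across components. Take a heavy basis vector $\mbf{v}_0 \in V_j$ with $|\sang{\mbf{v}_0,\mbf{x}}| \sim \sqrt{a_j}$, together with $m \sim a_j \log d$ light basis vectors $\mbf{u}_1,\dots,\mbf{u}_m$ drawn from other components, and replace them by an orthogonal rotation of $\on{span}(\mbf{v}_0,\mbf{u}_1,\dots,\mbf{u}_m)$ that redistributes the heavy contribution. A Hadamard-type rotation spreads $\sqrt{a_j}$ over $\sqrt{m+1}$ new directions as desired, but naively accumulates the light contributions and falls short by a factor of $\sqrt{m}$; I would instead try random-sign rotations and invoke subgaussian-type cancellation of sums $\sum_i \epsilon_i \sang{\mbf{u}_i,\mbf{x}}$, possibly combined with the bounds given by \cref{thm:complex} applied to auxiliary subspaces.

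The main obstacle is the base case, namely proving \cref{conj:thin-basis} under the assumption that the $G$-action on $\mb{C}^d$ is irreducible. \cref{thm:complex} only locates a single $k$-dimensional subspace with small projection, not individual basis directions, so promoting it to a full basis seems to require substantial additional input. The two natural routes are an induction on the structure of $G$ in the spirit of Green's proof of \cref{thm:green} (with the classification of finite simple groups handling almost-simple cases and a stronger inductive hypothesis that outputs bases rather than subspaces), or an \emph{intrinsic} construction of the basis from the representation, such as the joint eigenbasis of a maximal abelian subgroup $T \le G$. The latter is motivated by the signed-permutation example, where the simultaneous eigenbasis of the diagonal $(\mb{Z}/2)^d$ is precisely the standard basis and is already thin; extending this intrinsic philosophy uniformly across all irreducibly acting $G$ appears to be the crux of the conjecture.
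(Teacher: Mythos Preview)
This statement is \cref{conj:thin-basis}, which the paper poses as an \emph{open conjecture}; there is no proof in the paper to compare against. The authors explicitly write that ``proving either conjecture in full appears to require additional ideas,'' and only note that \cref{thm:complex} gives a partial result (a set $L$ of roughly $d^{0.99}$ orthonormal vectors, not a full basis, satisfying \cref{eq:thin-basis}).

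Your proposal is a strategy sketch rather than a proof, and you are candid about this. The reduction to irreducible components is a natural opening move, but it only goes through cleanly when every component obeys $d_j \ge d^{a_j}$; your mixing idea for the complementary case is left at the level of ``try random-sign rotations,'' with the $\sqrt{m}$ loss not actually handled. More seriously, you correctly identify the irreducible base case as the crux and leave it open. Your maximal-abelian-subgroup heuristic already fails in the motivating example: for $G = \mf{S}_d \ltimes (\mb{Z}/2\mb{Z})^d$ the diagonal subgroup has joint eigenbasis $\{\mbf{e}_i\}$, which is thin for the orbit of $(1,1/\sqrt{2},\ldots,1/\sqrt{d})/\sqrt{H_d}$ but not for the orbit of $\mbf{e}_1$ itself (where $|\sang{\mbf{e}_1,\mbf{e}_1}|=1$ and one needs a Hadamard-type basis instead). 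So the basis must depend on $X$ and not merely on $G$, and no intrinsic group-theoretic construction of this kind can work uniformly. In short, your outline locates the difficulty accurately but does not resolve it, which is consistent with the conjecture's status in the paper.
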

Establishing an upper bound that decays to zero as $d \to \infty$ would already be interesting. 
Note that \cref{thm:complex} implies the existence of a set $L$ of orthonormal vectors with $\sabs{L} \ge d^{0.99}$ so that \cref{eq:thin-basis} holds (and likely extendable to $\sabs{L} \ge d/(\log d)^C$ via our techniques). 
Proving either conjecture in full appears to require additional ideas.

\begin{remark}
Green's proof \cite{Gre20} of \cref{thm:real,thm:complex} in the case $k = 1$ contains two errors.
The first error is due to a missing supremum inside the integral in the first and second lines of the last display equation in proof of Proposition 2.1 on page 560.
The second error occurs at the final equality step of the top display equation on page 569, after right after (4.4); here an orthogonality relation was incorrectly applied as it requires an unjustified exchange of the integral and supremum. Our proof here corrects these errors. Green has also updated the arXiv version of his paper \cite{Gre20} incorporating these corrections.
\end{remark}

\section{Proof strategy}\label{sec:preliminaries}

The subspace $W$ in \cref{thm:complex} must vary according to the transitive set $X$. On other hand, the strategy is to construct a single probability distribution $\mu$ (depending only on the symmetry group $G \leqslant \msf{U}(\mb{C}^d)$ but not on $X$) on the set $\Gr_\mb{C}(k, d)$ of $k$-dimensional subspaces of $\mb{C}^d$. This is an important idea introduced by Green (for $k=1$).

\begin{definition}\label{def:k-dimensional-f}
Let $1 \le k \le d$.
Let $f_k(d)$ be the smallest value so that for every finite $G\leqslant\msf{U}(\mb{C}^d)$, there is a probability measure $\mu$ on $\on{Gr}_{\mb{C}}(k,d)$ such that for all $\mbf{v}\in\mb{S}(\mb{C}^d)$,
\[
\int\sup_{g\in G}\snorm{\proj_W(g\mbf{v})}_2^2d\mu(W)\le f_k(d)^2
\]
\end{definition}

The values $f_k(d)$ are well defined since the space of probability measures $\mu$ in question is closed under weak limits.

Our main result about $f_k(d)$ is stated below.

\begin{theorem}\label{thm:f}
If $k\le d/(\log d)^{20}$, then 
\[
f_k(d)\lesssim \frac{1}{\sqrt{\log(d/k)}}.
\]
\end{theorem}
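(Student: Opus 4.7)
The plan is to build the measure $\mu$ on $\Gr_\mb{C}(k,d)$ by aggregating independent samples from Green's construction for $k=1$ (i.e., the $k=1$ case of \cref{thm:f}). Writing $\mu_1$ for that measure on lines, with unit representatives $w$, we have $\int \sup_{g \in G} |\langle w, gv\rangle|^2 \, d\mu_1(w) \lesssim 1/\log d$ for all unit $v$. The most naive construction takes $\mu$ to be the pushforward of $\mu_1^{\otimes k}$ under the span map $(\ell_1,\dots,\ell_k) \mapsto \ell_1 + \cdots + \ell_k$. If the sampled unit vectors were orthonormal then $\snorm{\proj_W gv}_2^2 = \sum_i |\langle u_i, gv\rangle|^2$, and the crude bound $\sup_g \sum_i \le \sum_i \sup_g$ yields only $f_k(d)^2 \lesssim k\,f_1(d)^2 \lesssim k/\log d$. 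This matches the target $1/\log(d/k)$ only when $k = O(1)$; for larger $k$ it is off by a factor of $k \log(d/k) / \log d$.

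To reach the sharp scale I would oversample and then select. Draw $N \gg k$ i.i.d.\ lines $w_1,\dots,w_N \sim \mu_1$ and apply a restricted-invertibility theorem (Bourgain-Tzafriri, or Marcus-Spielman-Srivastava) to extract a sub-collection $S$ of size $k$ whose unit vectors $\{u_i\}_{i\in S}$ are well-conditioned, i.e.\ the $d \times k$ matrix $U_S$ with columns $u_i$ satisfies $\sigma_{\min}(U_S) \gtrsim 1$. Letting $W = \mathrm{span}\{u_i : i \in S\}$, the well-conditioning gives $\snorm{\proj_W gv}_2^2 \lesssim \snorm{U_S^*\, gv}_2^2$, so the task reduces to bounding $\mb{E}\bigl[\sup_{g \in G} \snorm{U_S^*\, gv}_2^2\bigr]$, where the expectation is over the samples and the induced selection $S$.

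The main obstacle is exactly this last bound, which encodes a conflict between the two sources of randomness: the sampling of lines (controlled by Green's measure $\mu_1$) and the restricted-invertibility selection (which depends adversarially on the worst $g$). Crude operator-norm control of $U_S$ gives only $\snorm{U_S^* gv}_2^2 = O(1)$, too weak by a factor of $\log(d/k)$; and the lossy bound $\sup_g \snorm{U_S^* gv}_2^2 \le \sum_{i\in S} \sup_g |\langle u_i, gv\rangle|^2$ again costs a factor of $k$. One must therefore argue more delicately, presumably by combining a matrix-Bernstein concentration statement for $\sum_i u_i u_i^*$ (using that $\int w w^*\, d\mu_1(w)$ has bounded operator norm) with a chaining or sparsification argument over the orbit $Gv$ that genuinely exploits the per-line $\sup_g$-bound afforded by $\mu_1$. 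The hypothesis $k \le d/(\log d)^{20}$ should enter both in sizing the oversampling $N = k\,(\log d)^{O(1)}$ and in absorbing the $(\log d)^{O(1)}$ overheads produced by restricted invertibility and by matrix concentration; threading these losses to produce precisely $1/\log(d/k)$ at the end is where the delicate work lies.
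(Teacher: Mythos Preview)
Your strategy differs fundamentally from the paper's, and the obstacle you flag in your last paragraph is, I believe, fatal rather than merely technical.

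The $k=1$ bound has the form $\mb{E}_{w\sim\mu_1}\bigl[\sup_g |\langle w,gv\rangle|^2\bigr]\lesssim 1/\log d$: an \emph{average over $w$} of a \emph{supremum over $g$}. What you need for a $k$-dimensional subspace is a bound on $\sup_g \sum_{i\in S}|\langle u_i,gv\rangle|^2$: a \emph{supremum over $g$} of a \emph{sum over $i$}. Swapping these costs a factor of $k$, as you note. Avoiding that loss would require the maximizing $g$ for different $u_i$ to be essentially uncorrelated, but nothing in the definition of $f_1(d)$ provides this; there is no obstruction to a single $g^\ast$ being near-maximizing for a positive fraction of the $\mu_1$-mass. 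Matrix Bernstein controls $\bigl\|\sum_i u_iu_i^\ast\bigr\|$, but once the $u_i$ are well-conditioned this only yields the trivial bound $\sup_g\sum_i|\langle u_i,gv\rangle|^2\le\bigl\|\sum_i u_iu_i^\ast\bigr\|\approx 1$. Chaining over the orbit $Gv$ would require metric-entropy control of $Gv$, which for large $G$ is precisely what is unavailable without structural input about $G$. There is also a more concrete obstruction: Green's actual measure $\mu_1$ for the group $\Gamma_d$ (the $k=1$ case of the construction in \cref{sec:permutation}) is supported on only $O(\log d)$ lines, so its samples cannot even span a $k$-dimensional space once $k\gg\log d$; symmetrizing $\mu_1$ over $G$ repairs the support but not the decoupling problem above.

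The paper does not bootstrap from $k=1$ at all. It builds the $k$-dimensional measure directly, via a chain of representation-theoretic reductions: \cref{prop:reduction-irred} reduces arbitrary $G$ to irreducible $G$ via Maschke's theorem; \cref{prop:reduction-prim} reduces irreducible to primitive (via systems of imprimitivity) together with the permutation case; \cref{prop:f-primitive} handles primitive $G$ via deep group theory (Collins' sharpening of Jordan's theorem, ultimately CFSG), reducing once more to the alternating case; and \cref{prop:permutation} handles the alternating/symmetric case by explicitly constructing a delocalized $k$-dimensional subspace at each of roughly $\log(d/k)$ dyadic scales. Every step produces or reduces to a genuinely $k$-dimensional object; the $k=1$ theorem is never invoked as a black box.
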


\begin{proof}[Proof of \cref{thm:complex} given \cref{thm:f}]
Let our transitive set $X$ be the orbit of $\mbf{v}\in\mb{S}(\mb{C}^d)$ under the action of the the finite subgroup $G \leqslant \msf{U}(\mb{C}^d)$.
By \cref{thm:f} and \cref{def:k-dimensional-f}, there is a measure $\mu$ on $\on{Gr}_{\mb{C}}(k,d)$ such that
\[\int\sup_{g\in G}\snorm{\proj_W(g\mbf{v})}_2^2d\mu(W)\le f_k(d)^2.\]
Therefore there is some $k$-dimensional subspace $W$ with
\[\sup_{g\in G}\snorm{\proj_W(g\mbf{v})}_2 \le f_k(d) \lesssim\frac{1}{\sqrt{\log(d/k)}}.\qedhere\]
\end{proof}

To prove \cref{thm:f} , we will decompose $G$ to ``smaller'', more restricted cases, namely irreducible and primitive representations. We will also need to consider permutation groups (for both the reduction step as well as the primitive case).

\subsection{Preliminaries}\label{sub:preliminaries}
\begin{definition}\label{def:primitive}
We say that $G\leqslant\msf{U}(\mb{C}^d)$ is \emph{imprimitive} if there is a \emph{system of imprimitivity}: a decomposition
\[\mb{C}^d = \bigoplus_{i=1}^\ell V_i\]
with $0 < \dim V_i < d$ such that for every $g \in G$ and $i\in[\ell]$ one has $gV_i = V_j$ for some $j \in [\ell]$. (The subspaces $V_i$ need not be orthogonal.) 
Otherwise we say that $G$ is \emph{primitive}.
\end{definition}

\begin{remark}
Both primitivity and irreducibility are properties of a representation, rather than intrinsic to a group. We identify $G \leqslant \msf{U}(\mb{C}^d)$ with its natural representation on $\mb{C}^d$.
\end{remark}

It follows from Maschke's theorem that primitive group representations are irreducible. 

\begin{definition}\label{def:decreasing-form}
Given $\mbf{v} = (v_1,\ldots, v_d) \in\mb{C}^d$, let
\[
\mbf{v}^\succ = (\sabs{v_{\sigma(1)}}, \ldots, \sabs{v_{\sigma(d)}}) \in \mb{R}^d
\]
where $\sigma$ is a permutation of $[d]$ so that 
\[
\sabs{v_{\sigma(1)}} \ge  \cdots \ge \sabs{v_{\sigma(d)}}.
\]
We write $v_i^\succ$ for the $i$-th coordinate of $\mbf{v}^\succ$. Let
\[
\on{Dom}(\mbf{v}) = \{ \mbf{w}\in\mb{C}^d : w_i^\succ \le v_i^\succ \text{for all }i \in [d]\}.
\]
\end{definition}

Let (here $\mf{S}_d$ denotes the symmetric group)
\[
\Gamma_d := \mf{S}_d\ltimes(\mb{S}^1)^d\le\msf{U}(\mb{C}^d)
\]
be the group that acts on $\mb{C}^d$ be permuting its coordinates and multiplying individual coordinates by unit complex numbers. Then $\on{Dom}(\mbf{v})$ is the convex hull of the $\Gamma_d$-orbit of $\mbf{v}$.

We define some variants of $f_k(d)$ when the group $G$ is restricted to special types.

\begin{definition}\label{def:k-dimensional-f-2}
Given $k\in[d]$, let $f^{\on{irred}}_k(d)$ (resp. $f^{\on{prim}}_k(d)$) be the smallest value so that for every finite $G\leqslant\msf{U}(\mb{C}^d)$ which is irreducible (resp. primitive), there is a probability measure $\mu$ on $\on{Gr}_{\mb{C}}(k,d)$ such that for every $\mbf{v}\in\mb{S}(\mb{C}^d)$,
\[\int\sup_{g\in G}\snorm{\proj_W(g\mbf{v})}_2^2d\mu(W)\le f^{\on{irred}}_k(d)^2 \qquad \text{(resp. $f^{\on{prim}}_k(d)^2$)}.
\]
\end{definition}

The permutation action on $\mb{C}^d$ deserves special attention. 

\begin{definition}\label{def:alternative-f}
Let $f^{\on{sym}}_k(d)$ be the smallest value so that there is a probability measure $\mu$ on $\on{Gr}_{\mb{C}}(k,d)$ such that for every $\mbf{v} \in \mb{S}(\mb{C}^d)$,
\[
\int\sup_{\mbf{u}\in\on{Dom}(\mbf{v})}\snorm{\proj_W(\mbf{u})}_2^2d\mu(W)\le f^{\on{sym}}_k(d)^2.
\]
Define $f^{\on{alt}}_k(d)$ to be the same with the additional constraint that $\mu$ is supported on the set of $k$-dimensional subspaces of the hyperplane $x_1 + \cdots + x_d = 0$.
\end{definition}

We will often equivalently consider, instead of $\mu$ on $\on{Gr}_{\mb{C}}(k,d)$, the corresponding measure $\mu^\ast$ on the complex Stiefel manifold $V_k(\mb{C}^d)$, that is, $\mu^\ast$ is derived from $\mu$ by first sampling a $\mu$-random $k$-dimensional subspace $W$ of $\mbf{C}^d$, and then outputting a uniformly sampled a unitary basis $(\mbf{w}_1, \ldots, \mbf{w}_k)$ of $W$.
We have 
$\snorm{\proj_W(\mbf{u})}_2^2 = \sum_{\ell=1}^k \sabs{\sang{g \mbf{v}_1, \mbf{w}_\ell}}^2$.

\subsection{Reductions}\label{sub:strategy}
We first reduce the general problem to the irreducible case. 

\begin{proposition}\label{prop:reduction-irred}
If $1\le k < \ell\le d$ then 
\[
f_k(d)\le\max\bigl\{\sqrt{k/\ell},\sup_{d'\ge d/(2\ell)}f^{\on{irred}}_{\lceil 2kd'/d\rceil}(d')\bigr\}.
\]
\end{proposition}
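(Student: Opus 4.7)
The plan is to decompose $\mb{C}^d$ into orthogonal irreducible $G$-subrepresentations $V_1,\ldots,V_m$ of dimensions $d_1,\ldots,d_m$; this is possible by Maschke's theorem together with unitarity (the orthogonal complement of any $G$-invariant subspace is itself $G$-invariant). I would then partition the indices into $L = \{i : d_i \ge d/(2\ell)\}$ (``large'' components) and $S = [m]\setminus L$, and split into two cases according to whether $\sum_{i \in L} d_i \ge d/2$ or $\sum_{i \in S} d_i > d/2$.

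In the first case, for each $i \in L$ set $k_i = \lceil 2kd_i/d\rceil$ (a valid dimension in $[1,d_i]$ provided $k \le d/2$) and sample $W_i \subset V_i$ independently from an $f^{\on{irred}}_{k_i}(d_i)$-optimal measure on $\on{Gr}_{\mb{C}}(k_i,V_i)$---legitimate since the image of $G$ in $\msf{U}(V_i)$ acts irreducibly. Setting $W_0 = \bigoplus_{i \in L} W_i$ (of dimension $\sum_{i \in L} k_i \ge 2k\sum_{i \in L}d_i/d \ge k$), I would take $W$ to be a uniformly random $k$-dimensional subspace of $W_0$. Projection monotonicity $\snorm{\proj_W\mbf{x}}_2 \le \snorm{\proj_{W_0}\mbf{x}}_2$, the orthogonality $V_i \perp V_j$ together with $G$-invariance $gV_i=V_i$, and the sub-additivity $\sup_g \sum_i \le \sum_i \sup_g$ combine to give
\[
\mb{E}\sup_g\snorm{\proj_W g\mbf{v}}_2^2 \le \sum_{i\in L} f^{\on{irred}}_{k_i}(d_i)^2\snorm{\mbf{v}_i}_2^2 \le \max_{i\in L}f^{\on{irred}}_{k_i}(d_i)^2,
\]
which is bounded by $\sup_{d' \ge d/(2\ell)} f^{\on{irred}}_{\lceil 2kd'/d\rceil}(d')^2$.

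In the second case, $\sum_{i\in S} d_i > d/2$ combined with $d_i < d/(2\ell)$ for $i \in S$ forces $|S| > \ell > k$. I would sample $i_1,\ldots,i_k$ distinct uniformly from $S$ and, independently, uniform unit vectors $\mbf{w}_j \in V_{i_j}$; since distinct $V_{i_j}$'s are orthogonal, $\{\mbf{w}_j\}$ is an orthonormal set and $W := \on{span}(\mbf{w}_1,\ldots,\mbf{w}_k)$ is $k$-dimensional. As $\mbf{w}_j \in V_{i_j}$ and $g V_{i_j} = V_{i_j}$, $\sang{\mbf{w}_j, g\mbf{v}} = \sang{\mbf{w}_j, g\mbf{v}_{i_j}}$, so Cauchy--Schwarz gives $\sup_g\snorm{\proj_W g\mbf{v}}_2^2 \le \sum_{j=1}^k \snorm{\mbf{v}_{i_j}}_2^2$; averaging over the $(i_j)$'s yields $(k/|S|)\sum_{i\in S}\snorm{\mbf{v}_i}_2^2 \le k/\ell$.

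The main obstacle is the bookkeeping in Case 1: one must simultaneously have $k_i \le d_i$ (so $f^{\on{irred}}_{k_i}(d_i)$ is defined) and $\sum_{i\in L}k_i \ge k$ (so there is room for a $k$-dimensional $W$ inside $W_0$), and the factor of $2$ in $\lceil 2kd_i/d\rceil$ is exactly what reconciles these given only $\sum_{i\in L}d_i \ge d/2$. The edge regime $k > d/2$ forces some $k_i > d_i$, but then adopting the trivial convention $f^{\on{irred}}_{k'}(d')=1$ whenever $k' \ge d'$ absorbs the issue into the right-hand side of the proposition.
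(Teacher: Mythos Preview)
Your proposal is correct and follows essentially the same approach as the paper: the same Maschke decomposition, the same large/small split at threshold $d/(2\ell)$, the same direct-sum-of-optimal-subspaces construction in the large case, and the same random-selection-of-one-vector-per-component construction in the small case. The only cosmetic difference is that in the small case the paper first fixes an $\ell$-element subset $I\subseteq S$ and then samples $k$ of those vectors (yielding exactly $k/\ell$), whereas you sample $k$ indices directly from all of $S$ (yielding $k/|S|<k/\ell$); and your explicit handling of the edge regime $k>d/2$ via the convention $f^{\on{irred}}_{k'}(d')=1$ for $k'\ge d'$ is a point the paper leaves implicit.
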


We then reduce the irreducible case to the primitive case and the alternating case.

\begin{proposition}\label{prop:reduction-prim}
If $k\le d/2$, then 
\[
f^{\on{irred}}_k(d)\le\max_{d_1d_2=d}\bigl(\min\bigl\{f^{\on{prim}}_{\lceil k/d_1\rceil}(d_2),f^{\on{alt}}_k(d_1)+\mbm{1}_{k\ge d_1}\bigr\}\bigr).
\]
\end{proposition}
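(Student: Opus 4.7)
The plan is, given an irreducible $G \leqslant \msf{U}(\mb{C}^d)$, to pick a system of imprimitivity and build the measure $\mu$ in two competing ways, one per term of the inner $\min$. If $G$ is primitive, use the trivial factorization $d_1=1$, $d_2=d$, giving the bound $f^{\on{prim}}_k(d)$ directly. Otherwise choose a \emph{maximal} imprimitive decomposition $\mb{C}^d = V_1 \oplus \cdots \oplus V_{d_1}$, which we may take to be orthogonal (a standard fact for unitary representations, following for instance from unitarization of $\on{Ind}_H^G V_1$), so each $\dim V_j = d_2 = d/d_1$. Let $H := \on{Stab}_G(V_1)$; by maximality, the $H$-action on $V_1 \simeq \mb{C}^{d_2}$ is primitive. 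Irreducibility makes $G$ act transitively on the $V_j$'s, so fix $g_j \in G$ with $g_j V_1 = V_j$ and $g_1 = e$. Writing $\mbf{v} = \sum_i g_i \mbf{u}_i$ with $\mbf{u}_i \in V_1$ and letting $\sigma = \sigma(g) \in \mf{S}_{d_1}$ be the induced block permutation, one checks
\[
(g\mbf{v})_j = g_j\, h_{\sigma^{-1}(j)}\, \mbf{u}_{\sigma^{-1}(j)}, \qquad h_i := g_{\sigma(i)}^{-1} g g_i \in H,
\]
so $g \mapsto (\sigma; h_\bullet)$ realizes $G$ as a subgroup of the wreath product $H \wr \mf{S}_{d_1}$.

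For the primitive option, sample $Z \subset V_1$ of dimension $k' := \lceil k/d_1\rceil$ from the measure witnessing $f^{\on{prim}}_{k'}(d_2)$. Pick nonnegative integers $k_j \le k'$ with $\sum_j k_j = k$ (possible since $d_1 k' \ge k$), take $W_j \subset g_j Z$ of dimension $k_j$, and set $W := \bigoplus_j W_j$. Orthogonality of the $V_j$ combined with unitarity of $g_j$ yields
\[
\snorm{\proj_W(g\mbf{v})}_2^2 = \sum_j \snorm{\proj_{W_j}((g\mbf{v})_j)}_2^2 \le \sum_j \snorm{\proj_{g_j Z}((g\mbf{v})_j)}_2^2 = \sum_i \snorm{\proj_Z(h_i \mbf{u}_i)}_2^2.
\]
Since $(h_1,\ldots,h_{d_1})$ varies in a subset of $H^{d_1}$ as $g$ ranges over $G$, the $g$-supremum decomposes coordinate-wise; applying the defining bound of $f^{\on{prim}}_{k'}(d_2)$ to each $\mbf{u}_i$ and then taking expectation over $Z$ produces the bound $f^{\on{prim}}_{k'}(d_2)^2 \sum_i \snorm{\mbf{u}_i}_2^2 = f^{\on{prim}}_{k'}(d_2)^2$.

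For the alternating option, fix a unit vector $\mbf{e} \in V_1$ and set $\mbf{e}_j := g_j \mbf{e}$, which are orthonormal in $\mb{C}^d$. Let $k_0 := \min(k, d_1-1)$ and sample $W_0$ of dimension $k_0$ in the hyperplane $\{\sum_i c_i = 0\} \subset \mb{C}^{d_1}$ from the measure witnessing $f^{\on{alt}}_{k_0}(d_1)$. Lift isometrically to $W_1 := \{\sum_j c_j \mbf{e}_j : \mbf{c} \in W_0\}$, and if $k \ge d_1$ augment with any fixed $W_2 \perp W_1$ of dimension $k - k_0$, setting $W := W_1 \oplus W_2$. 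Writing $\mbf{z}^g_j := \sang{\mbf{e}_j, (g\mbf{v})_j} = \sang{\mbf{e}, h_{\sigma^{-1}(j)}\mbf{u}_{\sigma^{-1}(j)}}$, Cauchy--Schwarz gives $\sabs{\mbf{z}^g_j} \le \snorm{\mbf{u}_{\sigma^{-1}(j)}}_2 = \mbm{v}_{\sigma^{-1}(j)}$ for $\mbm{v} := (\snorm{\mbf{v}_i}_2)_i$, placing $\mbf{z}^g \in \on{Dom}(\mbm{v})$. Hence
\[
\sup_g \snorm{\proj_{W_1}(g\mbf{v})}_2 \le \sup_{\mbf{u} \in \on{Dom}(\mbm{v})} \snorm{\proj_{W_0}(\mbf{u})}_2,
\]
whose $W_0$-expectation is at most $f^{\on{alt}}_{k_0}(d_1)^2$ using $\snorm{\mbm{v}}_2 = 1$. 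Combining the trivial bound $\snorm{\proj_{W_2}(g\mbf{v})}_2^2 \le 1$ with $a^2 + b^2 \le (a+b)^2$ (for $a,b \ge 0$) yields the target $f^{\on{alt}}_{k_0}(d_1) + \mbm{1}_{k \ge d_1}$.

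The obstacles are conceptual setup rather than analytic: correctly selecting an orthogonal system of imprimitivity and identifying the wreath-product structure, after which the two bounds arise transparently by projecting along the primitive direction inside each block (Option 1) versus along the block-permutation direction (Option 2). The finicky technical points are the dimension allocation $\sum_j k_j = k$ in Option 1 and the hyperplane-plus-padding construction when $k \ge d_1$ in Option 2.
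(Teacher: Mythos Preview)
Your proof is correct and follows essentially the same approach as the paper: choose a maximal system of imprimitivity, exploit the resulting wreath-product structure, and build the measure $\mu$ in two ways---one by lifting the primitive measure on $V_1$ into each block, the other by fixing a vector $\mbf{e}\in V_1$ and lifting the alternating measure along the block-permutation direction. The only cosmetic differences are that the paper packages both constructions via a single bilinear map $\psi:V_1\times\mb{C}^{d_1}\to\mb{C}^d$ (so the primitive option takes the full $\lceil k/d_1\rceil\cdot d_1$-dimensional subspace rather than allocating dimensions $k_j$ explicitly), and that the paper simply declines to run the alternating construction when $k\ge d_1$, invoking the trivial bound $f\le 1$ instead of your padding by $W_2$.
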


The symmetric and alernating cases can be handled explicitly, yielding the following.

\begin{proposition}\label{prop:permutation}
If $k\le d/(\log d)^5$, then 
\[
f_k^{\on{sym}}(d)\le f_k^{\on{alt}}(d)\lesssim 1/\sqrt{\log(d/k)}.
\]
\end{proposition}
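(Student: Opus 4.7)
The first inequality $f_k^{\on{sym}}(d) \le f_k^{\on{alt}}(d)$ is immediate from the definitions, since $f_k^{\on{alt}}$ merely imposes the additional restriction that $\mu$ be supported on $k$-dimensional subspaces of the hyperplane $\{\mbf{x}: \sum_i x_i = 0\}$, so every candidate measure for $f_k^{\on{alt}}$ is also a candidate measure for $f_k^{\on{sym}}$.

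For the main bound $f_k^{\on{alt}}(d) \lesssim 1/\sqrt{\log(d/k)}$, I would construct $\mu$ as a uniform mixture over $O(\log(d/k))$ dyadic scales. Set $J := \lfloor \log_2(d/k)\rfloor$ and $s_j := k\cdot 2^j$ for $j \in \{0, 1, \ldots, J\}$, so that $s_j \in [k, d]$. Let $\mu_j$ be the distribution that first samples a uniformly random subset $S \subseteq [d]$ with $|S| = s_j$, and then samples a Haar-random $k$-dimensional subspace $W$ of the hyperplane $\{\mbf{u} \in \on{span}(e_i : i \in S) : \sum_{i \in S} u_i = 0\}$. Take $\mu$ to be the uniform mixture of $\mu_0, \ldots, \mu_J$.

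Using that $\snorm{\proj_W \cdot}_2^2$ is convex and that $\on{Dom}(\mbf{v})$ is the convex hull of the $\Gamma_d$-orbit of $\mbf{v}$, I have $\sup_{\mbf{u} \in \on{Dom}(\mbf{v})} \snorm{\proj_W \mbf{u}}_2^2 = \sup_{g \in \Gamma_d} \snorm{\proj_W g\mbf{v}}_2^2$. The central per-scale estimate to establish is
\[
\mb{E}_{\mu_j} \sup_{g \in \Gamma_d} \snorm{\proj_W g\mbf{v}}_2^2 \lesssim \frac{k}{s_j}\sum_{i=1}^{s_j}(v_i^\succ)^2.
\]
Since $\sum_{i=1}^{s_j}(v_i^\succ)^2 \le \snorm{\mbf{v}}_2^2 = 1$ and $k/s_j = 2^{-j}$, averaging over $j$ would then yield
\[
\mb{E}_\mu \sup_g \snorm{\proj_W g\mbf{v}}_2^2 \le \frac{1}{J+1}\sum_{j=0}^J \frac{1}{2^j} \lesssim \frac{1}{\log(d/k)}.
\]

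The hard part will be establishing the per-scale estimate, which requires interchanging $\mb{E}_W$ and $\sup_g$. For any fixed $g$, $\mb{E}_W \snorm{\proj_W g\mbf{v}}_2^2 = (k/(s_j - 1)) \snorm{\proj_H (g\mbf{v})|_S}_2^2 \le (k/s_j) \sum_{i=1}^{s_j}(v_i^\succ)^2$, where $H$ denotes the hyperplane inside $\on{span}(e_i : i \in S)$. Upgrading this to a bound on $\mb{E}_W \sup_g$ requires a concentration and chaining argument for $\snorm{\proj_W (g\mbf{v})|_S}_2^2$ on the Grassmannian, together with a union bound over the exponentially large group $\Gamma_d$. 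Alternatively, one can replace the Haar-random $W$ by a deterministic Hadamard-type $W$ (spanned by $k$ non-trivial characters of $\mb{Z}/s_j\mb{Z}$ supported on $S$, each with entries of magnitude $O(1/\sqrt{s_j})$), reducing the per-scale bound to a combinatorial estimate on the entries of the projection matrix $P = W W^\ast$. Either route ultimately mirrors, in an enlarged-dimensional setting, Green's analysis of the $k = 1$ symmetric case, with the key technical point being that no single element $g \in \Gamma_d$ can simultaneously saturate all $k$ inner products $\sang{\mbf{w}_\ell, g\mbf{v}}$, so the factor of $k$ arising from the naive bound $\sum_\ell \sup_g \sang{\mbf{w}_\ell, g\mbf{v}}^2$ can be absorbed.
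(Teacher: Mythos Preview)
Your per-scale estimate
\[
\mb{E}_{\mu_j}\sup_{g\in\Gamma_d}\snorm{\proj_W(g\mbf{v})}_2^2\ \lesssim\ \frac{k}{s_j}\sum_{i=1}^{s_j}(v_i^\succ)^2
\]
is false, and neither of the two routes you sketch (Grassmannian concentration, or a Hadamard/Fourier basis) can rescue it. Take $\mbf{v}=(1/\sqrt d,\ldots,1/\sqrt d)$. Since $W\subseteq\on{span}(\mbf{e}_i:i\in S)$ and $\Gamma_d$ lets us put arbitrary unit complex phases on the entries of $\mbf{v}$,
\[
\sup_{g\in\Gamma_d}\snorm{\proj_W(g\mbf{v})}_2^2
=\frac{1}{d}\,\sup_{\mbf{w}\in\mb{S}(W)}\,\sup_{|\epsilon_i|=1}|\sang{\mbf{w},\bm{\epsilon}}|^2
=\frac{1}{d}\,\sup_{\mbf{w}\in\mb{S}(W)}\snorm{\mbf{w}}_1^2.
\]
For a Haar-random $k$-dimensional $W$ inside an $s_j$-dimensional coordinate slice, already a single random unit vector $\mbf{w}\in W$ has $\snorm{\mbf{w}}_1\asymp\sqrt{s_j}$, so $\mb{E}_{\mu_j}\sup_g\snorm{\proj_W g\mbf{v}}_2^2\gtrsim s_j/d$, whereas your right-hand side is $(k/s_j)(s_j/d)=k/d$. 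The gap is a factor $2^j$, so the estimate fails at every scale $j\ge 1$. This is not a tail issue that concentration or chaining can fix; it is the mean. The Fourier/Hadamard construction fails for exactly the same reason: a single character $\mbf{w}_\ell$ has $\snorm{\mbf{w}_\ell}_1=\sqrt{s_j}$, so one can pick phases $\bm{\epsilon}$ making $|\sang{\mbf{w}_\ell,\bm{\epsilon}}|^2=s_j$.

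The paper's argument does not attempt any per-scale variance bound. Instead it (i) builds, at each dyadic scale $2^j$, a \emph{specific} $k$-dimensional subspace $W_j$ all of whose unit vectors are delocalized in the sense $\sum_{i\in S}|w_i|^2\lesssim\log^{-4}(2^{j+1}/|S|)$ for every $S$ (this is where the $T$-norm and the Gaussian concentration lemmas enter, and is where the hypothesis $k\le d/(\log d)^5$ is used); and (ii) observes that this delocalization forces the \emph{optimizers} $\mbf{w}_j'\in\mb{S}(W_j)$ to satisfy $|\sang{(\mbf{w}_i')^\succ,(\mbf{w}_j')^\succ}|\lesssim |i-j|^{-2}$, so that Selberg's inequality gives $\sum_j\sang{\mbf{v}^\succ,(\mbf{w}_j')^\succ}^2\lesssim 1$ uniformly in $\mbf{v}$. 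The saving comes from a cross-scale near-orthogonality, not from a bound at each scale separately; your scheme of ``bound each scale, then sum $2^{-j}$'' cannot see this and, as the flat-vector example shows, is too weak.
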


This leaves the primitive case, which we prove by invoking an group theoretic result proved by Green \cite[Proposition~4.2]{Gre20} that allows us to once again reduce to the alternating case once again.

\begin{proposition}\label{prop:f-primitive}
There is an absolute constant $c > 0$ such that for $k\le cd/(\log d)^4$ we have 
\[
f_k^{\on{prim}}(d)\lesssim\sup_{d'\ge cd/(\log d)^4}f^{\on{alt}}_k(d').
\]
\end{proposition}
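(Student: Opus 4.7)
The plan is to invoke Green's structural result for primitive unitary subgroups \cite[Proposition~4.2]{Gre20}, which, via the classification of finite simple groups, establishes a dichotomy for every primitive $G\leqslant\msf{U}(\mb{C}^d)$: either (a) $|G|\le\exp((\log d)^{O(1)})$, or (b) there exists an integer $d'\gtrsim d/(\log d)^4$ together with an alternating section $\on{Alt}_{d'}$ of $G$ acting on an invariant subspace as a sum of $s$ copies of the standard deleted permutation representation, with the larger group $G$ permuting the isotypic blocks and acting on each by elements of $\Gamma_{d'}$.

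In case (a), take $\mu$ to be the unitarily invariant (Haar) probability measure on $\on{Gr}_{\mb{C}}(k,d)$. Lipschitz concentration on the Grassmannian gives
\[
\Pr_{W}\bigl[\snorm{\proj_W u}_2^2 > k/d+t\bigr]\le\exp(-\Omega(dt))
\]
for any fixed unit vector $u$. A union bound over the $G$-orbit of $\mbf{v}$, which has size at most $|G|$, then yields
\[
\int\sup_{g\in G}\snorm{\proj_W(g\mbf{v})}_2^2\,d\mu(W)\lesssim \frac{k+\log|G|}{d}\lesssim\frac{1}{\log(d/k)}
\]
once $k\le cd/(\log d)^4$ and $\log|G|\le(\log d)^{O(1)}$, comfortably dominating the right-hand side of the claim.

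In case (b), pull back the measure witnessing $f_k^{\on{alt}}(d')$. Identify the alternating-invariant subspace with $(\mb{C}_0^{d'})^{\oplus s}$, where $\mb{C}_0^{d'}=\{x_1+\cdots+x_{d'}=0\}\subset\mb{C}^{d'}$, and decompose $\mbf{v}=(\mbf{v}^{(1)},\ldots,\mbf{v}^{(s)})$. By the block structure, for each $g\in G$ there exist $i\in[s]$ and $\gamma\in\Gamma_{d'}$ so that the first-block component of $g\mbf{v}$ equals $\gamma\mbf{v}^{(i)}$, and hence lies in $\on{Dom}(\mbf{v}^{(i)})$. Let $\mu$ be the pushforward, under the first-block embedding, of the measure $\mu_{d'}^{\on{alt}}$ realizing $f_k^{\on{alt}}(d')$. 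Using $\max_i\le\sum_i$ on non-negative quantities together with rescaling in \cref{def:alternative-f},
\[
\int\sup_{g\in G}\snorm{\proj_W(g\mbf{v})}_2^2\,d\mu(W)\le\sum_{i=1}^s\snorm{\mbf{v}^{(i)}}_2^2\cdot f_k^{\on{alt}}(d')^2\le f_k^{\on{alt}}(d')^2,
\]
since $\sum_i\snorm{\mbf{v}^{(i)}}_2^2\le 1$, which is exactly the stated bound.

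The main obstacle will be faithfully parsing the conclusion of Green's Proposition~4.2 to extract precisely the ``permutation-with-phases on blocks'' normal form used in case (b); in particular, verifying that primitivity together with the CFSG dichotomy yields an action that truly factors through $\Gamma_{d'}$ on each block, with no residual projective phases or tensor twists that would prevent a clean comparison with $\on{Dom}$. A secondary concern is the low-dimensional ``complement'' outside the alternating invariant subspace, which should be absorbable into an error that does not affect the asymptotic bound.
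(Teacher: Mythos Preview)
Your case (b) contains the real gap, and it is precisely the one you flagged: the ``permutation-with-phases on blocks'' normal form is not what Green's structure theorem (stated as \cref{thm:group-theory}) delivers, and is in general false. The index-$\le 2$ subgroup is $A_n\times H$ with $n\gtrsim d/(\log d)^4$, and the relevant irreducible summand is $\psi\otimes\psi'$ where $\psi'$ is an \emph{arbitrary} irreducible unitary representation of $H$. If you write $V\otimes V'\cong\bigoplus_{i=1}^{s}(V\otimes\mbf{f}_i)$ for an orthonormal basis $(\mbf{f}_i)$ of $V'$, the element $(a,h)$ acts on the block index $i$ via the unitary matrix $\psi'(h)$, which has no reason to be monomial. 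Hence the first-block component of $g\mbf{v}$ is a genuine linear combination $\psi(a)\sum_i(\psi'(h))_{1i}\,\mbf{v}^{(i)}$, not a single $\gamma\mbf{v}^{(i)}$, and your comparison with $\on{Dom}(\mbf{v}^{(i)})$ collapses.

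The paper does not attempt to extract a monomial structure. Instead it decomposes along the $A_n$-coordinates, writing $\mbf{v}=\sum_{j=1}^{n}\mbf{e}_j\otimes\mbf{v}_j'$ with $\mbf{v}_j'\in V'$, and tensors the $f_k^{\on{alt}}(n)$-measure (sampling orthonormal $\mbf{u}_1,\ldots,\mbf{u}_k\in V$) with a single fixed unit vector $\mbf{x}\in V'$. One then has
\[
\bigl\langle(\psi(a)\otimes\psi'(h))\mbf{v},\ \mbf{u}_\ell\otimes\mbf{x}\bigr\rangle
=\sum_{j=1}^{n}\sang{\psi(a)\mbf{e}_j,\mbf{u}_\ell}\,\sang{\psi'(h)\mbf{v}_j',\mbf{x}},
\]
and since $\psi(a)$ merely permutes the $\mbf{e}_j$, the right side equals $\sang{\mbf{w},\mbf{u}_\ell}$ for some $\mbf{w}\in\on{Dom}(\mbf{y})$, where $y_j=\sup_{h\in H}\sabs{\sang{\psi'(h)\mbf{v}_j',\mbf{x}}}\le\snorm{\mbf{v}_j'}_2$. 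The $H$-action is thus absorbed coordinate-wise by the crude bound $\snorm{\mbf{y}}_2^2\le\sum_j\snorm{\mbf{v}_j'}_2^2=\snorm{\mbf{v}}_2^2$, with no monomial hypothesis needed. The key move is to swap which tensor factor you ``block'' along: index by the $n$ permutation coordinates, not by the $s=\dim V'$ copies.

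A smaller correction: your case-(a) threshold is misstated. The dichotomy in \cref{thm:group-theory} is $[G:Z_d\cap G]\le e^{d/\log d}$ (index over the scalar subgroup), not $|G|\le\exp\bigl((\log d)^{O(1)}\bigr)$. Your union-bound argument still works with the correct threshold---the $G$-orbit of $\mbf{v}$ has size at most $[G:Z_d\cap G]$ since scalars do not change $\snorm{\proj_W(\cdot)}_2$, and L\'evy concentration at scale $\asymp 1/\sqrt{\log d}$ beats $e^{d/\log d}$---but this is exactly \cref{thm:few-points}, not something lighter.
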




\subsection{Putting everything together}\label{sub:putting-together}
We are now in position to derive \cref{thm:f} using the preceding statements.

\begin{proposition}\label{prop:primitive}
If $k\le 2d/(\log d)^{10}$ then $f^{\on{prim}}_k(d)\lesssim 1/\sqrt{\log(d/k)}$.
\end{proposition}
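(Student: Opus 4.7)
The plan is to chain \cref{prop:f-primitive} and \cref{prop:permutation} and then verify that the arithmetic lines up. \cref{prop:f-primitive} reduces the primitive case at dimension $d$ to the alternating case at some (implicitly no larger) dimension $d' \ge cd/(\log d)^4$, and \cref{prop:permutation} handles the latter. The only real content is matching the hypotheses of the two ingredients and converting the resulting $\log(d'/k)$ into $\log(d/k)$.

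First I would apply \cref{prop:f-primitive}: its hypothesis $k \le cd/(\log d)^4$ is comfortably implied by our assumption $k \le 2d/(\log d)^{10}$ once $d$ is larger than an absolute constant (for smaller $d$ the desired bound is trivial from $f^{\on{prim}}_k(d)\le 1$, since then $d/k$ is bounded above). This yields
\[
f^{\on{prim}}_k(d) \lesssim \sup_{d' \ge cd/(\log d)^4} f^{\on{alt}}_k(d').
\]

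Next, for each such $d'$ I would apply \cref{prop:permutation}; its hypothesis $k \le d'/(\log d')^5$ follows from $\log d' \le \log d$ together with $d' \ge cd/(\log d)^4$, which give $d'/(\log d')^5 \gtrsim d/(\log d)^9 \gtrsim k$ under $k \le 2d/(\log d)^{10}$. \cref{prop:permutation} then supplies $f^{\on{alt}}_k(d') \lesssim 1/\sqrt{\log(d'/k)}$ uniformly over the relevant range of $d'$.

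Finally, I would compare $\log(d'/k)$ to $\log(d/k)$. The loss from $d$ to $d'$ is at most $4\log\log d + O(1)$, while the hypothesis $k \le 2d/(\log d)^{10}$ forces $\log(d/k) \ge 10\log\log d - O(1)$. Hence $4\log\log d \le (2/5)\log(d/k) + O(1)$, and therefore $\log(d'/k) \gtrsim \log(d/k)$ for $d$ above an absolute constant (small $d$ are again absorbed into the implicit constant). Combining these estimates yields $f^{\on{prim}}_k(d) \lesssim 1/\sqrt{\log(d/k)}$, as claimed. The entire argument is numerical bookkeeping on top of \cref{prop:f-primitive,prop:permutation}, which do all the substantive work; I do not anticipate any genuine obstacle beyond making sure the hypothesis $k \le 2d/(\log d)^{10}$ is strong enough to survive two successive $\log$ losses, which is exactly why the exponent was chosen to be $10 = 4 + 5 + 1$ rather than something smaller.
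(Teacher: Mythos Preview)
Your proposal is correct and follows exactly the paper's approach (the paper's proof is literally ``Combine \cref{prop:permutation,prop:f-primitive}''); you have simply filled in the routine arithmetic. One small point: you assert $\log d' \le \log d$, but the statement of \cref{prop:f-primitive} gives only a lower bound $d' \ge cd/(\log d)^4$ with no upper bound, so this step is not justified as written---however it is harmless, since for $d' > d$ the function $d'/(\log d')^5$ is increasing and $\log(d'/k) > \log(d/k)$, so both the hypothesis check for \cref{prop:permutation} and the final comparison are even easier in that range.
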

\begin{proof}
Combine \cref{prop:permutation,prop:f-primitive}.
\end{proof}
\begin{proposition}\label{prop:irreducible}
If $k\le d/(\log d)^{10}$ then $f^{\on{irred}}_k(d)\lesssim 1/\sqrt{\log(d/k)}$.
\end{proposition}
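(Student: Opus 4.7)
The approach is to combine \cref{prop:reduction-prim} with \cref{prop:primitive,prop:permutation}. \cref{prop:reduction-prim} reduces the claim to showing that, for every factorization $d = d_1 d_2$,
\[
\min\bigl\{f^{\on{prim}}_{\lceil k/d_1\rceil}(d_2),\; f^{\on{alt}}_k(d_1) + \mathbbm{1}_{k\ge d_1}\bigr\} \lesssim \frac{1}{\sqrt{\log(d/k)}}.
\]
I may freely assume $d/k$ (and hence $d$, via the standing hypothesis $k\le d/(\log d)^{10}$) is larger than any required absolute constant, since otherwise the target is $\Omega(1)$ and follows from the trivial bound $f^{\on{irred}}_k(d)\le 1$.

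The plan is to split the factorizations by the geometric-mean threshold $d_1 = \sqrt{dk}$, bounding the primitive term for $d_1 \le \sqrt{dk}$ and the alternating term for $d_1 > \sqrt{dk}$. For $d_1 \le \sqrt{dk}$, I invoke \cref{prop:primitive}. Its hypothesis $\lceil k/d_1\rceil \le 2d_2/(\log d_2)^{10}$ splits into the two subcases $d_1 \le k$ and $k < d_1 \le \sqrt{dk}$; in the first it reduces to $k \le d/(\log d_2)^{10}$, immediate from our standing assumption, and in the second $\lceil k/d_1\rceil = 1$ so it reduces to $d_2 \gtrsim 1$, true since $d_2 \ge \sqrt{d/k}$ is large. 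The resulting bound $f^{\on{prim}}_{\lceil k/d_1\rceil}(d_2) \lesssim 1/\sqrt{\log(d_2/\lceil k/d_1\rceil)}$ collapses to $O(1/\sqrt{\log(d/k)})$ because the ratio $d_2/\lceil k/d_1\rceil$ is at least $\Omega(d/k)$ in the first subcase (using $\lceil k/d_1\rceil \le 2k/d_1$) and at least $\Omega(\sqrt{d/k})$ in the second.

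For $d_1 > \sqrt{dk}$, the inequality $d\ge k$ gives $d_1 > k$ so the indicator vanishes, and the hypothesis $k \le d_1/(\log d_1)^5$ of \cref{prop:permutation} follows from $d_1/k \ge \sqrt{d/k} \ge (\log d)^5$. \cref{prop:permutation} then yields $f^{\on{alt}}_k(d_1) \lesssim 1/\sqrt{\log(d_1/k)} \le \sqrt{2}/\sqrt{\log(d/k)}$, completing the case. I do not anticipate any substantive obstacle, since all the hard work is packaged into the cited propositions; the only subtle point is that the threshold $d_1 = \sqrt{dk}$ and the specific exponent $10$ in the standing hypothesis are chosen precisely so that the hypotheses of both \cref{prop:primitive} and \cref{prop:permutation} hold uniformly across their respective sides of the split (the exponent $5$ from \cref{prop:permutation} just survives the square root $\sqrt{d/k}\ge(\log d)^5$).
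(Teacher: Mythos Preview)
Your proposal is correct and follows essentially the same approach as the paper: both split the factorizations $d=d_1d_2$ at the geometric-mean threshold (the paper phrases it via $\max\{d_2,d_1/k\}\ge\sqrt{d/k}$ in the range $d_1>k$, which is equivalent to your threshold $d_1=\sqrt{dk}$), invoke \cref{prop:primitive} on the side where $d_2$ is large and \cref{prop:permutation} on the side where $d_1/k$ is large, and verify the hypotheses using $k\le d/(\log d)^{10}$ in the same way. The only difference is cosmetic organization---the paper first separates off the case $d_1\le k$ before splitting by the geometric mean, whereas you split by the geometric mean first and then isolate $d_1\le k$ as a subcase.
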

\begin{proof}
By \cref{prop:reduction-prim}, we have
\[f^{\on{irred}}_k(d)\le\max_{d_1d_2=d}(\min(f^{\on{prim}}_{\lceil k/d_1\rceil}(d_2),f^{\on{alt}}_k(d_1)+\mbm{1}_{k\ge d_1})).\]

First consider the case $d_1\le k$. We have
\[\lceil k/d_1\rceil\le\frac{2d}{d_1(\log d)^{10}}\le\frac{2d_2}{(\log d_2)^{10}}.\]
By \cref{prop:primitive}, we have
\[
f^{\on{prim}}_{\lceil k/d_1\rceil}(d_2) \lesssim \frac{1}{\sqrt{\log(d_2/\lceil k/d_1 \rceil )}} \le \frac{1}{\sqrt{\log(d/(2k))}}.
\]

Now consider the case $d_1 > k$. 
Since $d_2(d_1/k) = d/k$, we have $\max\{d_1, d_2/k\} \ge \sqrt{d/k}$.
If $d_2\ge\sqrt{d/k}$, then
\[
f^{\on{prim}}_{\lceil k/d_1\rceil} (d_2)=f^{\on{prim}}_1(d_2)\lesssim\frac{1}{\sqrt{\log d_2}}\lesssim\frac{1}{\sqrt{\log(d/k)}}.\]
On the other hand, if $d_1/k\ge\sqrt{d/k}$, then $d_1/k\ge(\log d)^5$ so
\[k\le\frac{d_1}{(\log d)^5}\le\frac{d_1}{(\log d_1)^5}.\]
Hence \cref{prop:permutation} yields
\[f^{\on{alt}}_k(d_1)\lesssim\frac{1}{\sqrt{\log(d_1/k)}}\lesssim\frac{1}{\sqrt{\log(d/k)}}.\]

Thus it follows that, for all $d_1d_2 = d$,
\[
\min(f^{\on{prim}}_{\lceil k/d_1\rceil}(d_2),f^{\on{alt}}_k(d_1)+\mbm{1}_{k\ge d_1}) \lesssim \frac{1}{\sqrt{\log(d/k)}},
\]
and the result follows.
\end{proof}

Now we show the main result assuming the above statements.

\begin{proof}[Proof of \cref{thm:f}]
Let $\ell = \lceil\sqrt{dk}\rceil\ge 2k$. We have
\[k/\ell\lesssim\sqrt{k/d}\lesssim\frac{1}{\sqrt{\log(d/k)}}.\]
Also, if $d'\ge d/(2\ell)$ then
\[\bigg\lceil\frac{2kd'}{d}\bigg\rceil\le\frac{d'}{d/(2\ell)}\le\frac{d'}{(\log d)^{10}}\le\frac{d'}{(\log d')^{10}}.\]
By \cref{prop:irreducible}, we have
\[f^{\on{irred}}_{\lceil 2kd'/d\rceil}(d')\lesssim\frac{1}{\sqrt{\log(d'/\lceil 2kd'/d\rceil)}}\lesssim\frac{1}{\sqrt{\log(d/(2\ell))}}\lesssim\frac{1}{\sqrt{\log(d/k)}}.\]
Applying \cref{prop:reduction-irred} to $k$ and $\ell = \lceil\sqrt{dk}\rceil$, we find
\[f_k(d)\le\max(\sqrt{k/\ell},\sup_{d'\ge d/(2\ell)}f^{\on{irred}}_{\lceil 2kd'/d\rceil}(d'))\lesssim\frac{1}{\sqrt{\log(d/k)}}.\qedhere\]
\end{proof}

\subsection{Paper outline}\label{sub:structure}
In \cref{sec:reduction}, we prove the two key reductions, \cref{prop:reduction-irred,prop:reduction-prim}. In \cref{sec:permutation}, we prove the key estimate for the symmetric and alternating cases, \cref{prop:permutation}. In \cref{sec:primitive}, we prove the primitive case, \cref{prop:f-primitive}. Finally, in \cref{sec:real} we deduce a real version from the complex version, proving \cref{thm:real}.
In \cref{sec:optimality} we demonstrate optimality of our results by exhibiting the matching lower bound \cref{thm:lower-bound}.

\section{Reduction to primitive representations}\label{sec:reduction}
We first reduce the general case to the alternating and irreducible cases.

\begin{proof}[Proof of \cref{prop:reduction-irred}]
Consider $G\leqslant\msf{U}(\mb{C}^d)$. By Maschke's theorem, we can decompose into irreducible representations of $G$:
\[\mb{C}^d = \bigoplus_{j=1}^m V_j.\]
Let $d_j = \dim V_j$. Let 
\[J = \{j\in[m]: d_j\ge d/(2\ell)\}.\]

First suppose $\sum_{j \in J} d_j \ge d/2$. Then in each such $V_j$, we consider the probability measure $\mu_j$ that witnesses $f^{\on{irred}}_{\lceil 2kd_j/d\rceil}(d_j)$ for the irreducible representation of $G$ on $V_j$. That is, $\mu_j$ samples a $\lceil 2kd_j/d\rceil$-dimensional subspace of $V_j$ and satisfies
\[
\int\sup_{g\in G}\snorm{\proj_W(g\mbf{v})}_2^2d\mu_j(W)\le f^{\on{irred}}_{\lceil 2kd'/d\rceil}(d_j)^2\snorm{\mbf{v}}_2^2
\]
for each $\mbf{v}\in V_j$. We define $\mu$ to be a uniformly random $k$-dimensional subspace of $\bigoplus_{j \in J} W_j$, where each $W_j$ is an independent $\mu_j$-random $\lceil 2kd_j/d\rceil$-dimensional subspace of $V_j$. (Note the $W_j$'s are orthogonal as the $V_j$'s are.) The total dimension of this direct sum is at least $k$, so $\mu$ is well-defined.

Given $\mbf{v}\in\mb{C}^d$, write $\mbf{v} = \sum_{j=1}^m\mbf{v}_j$ with $\mbf{v}_j\in V_j$.  We have
\begin{align*}
\int\sup_{g\in G}\snorm{\proj_W(g\mbf{v})}_2^2d\mu(W) &\le\int\sup_{g\in G}\snorm{\proj_{\bigoplus_{j\in J}W_j}(g\mbf{v})}_2^2 \prod_{j\in J}d\mu_j(W_j)\\
&\le\sum_{j\in J}\int\sup_{g\in G}\snorm{\proj_{W_j}(g\mbf{v})}_2^2d\mu_j(W_j)\\
&\le\sum_{j\in J}f^{\on{irred}}_{\lceil 2kd_j/d\rceil}(d_j)^2\snorm{\mbf{v}_j}_2^2
\\
&\le\sup_{d'\ge d/(2\ell)}f^{\on{irred}}_{\lceil 2kd'/d\rceil}(d')^2\snorm{\mbf{v}}_2^2
\end{align*}
by orthogonality of the $V_j$.

Next suppose $\sum_{j \in J} d_j < d/2$. Then $|[m]\setminus J|\ge\ell$. Let $I$ be an $\ell$-element subset of $[m]\setminus J$. Choose arbitrary $\mbf{w}_j\in\mb{S}(V_j)\subseteq\mb{C}^d$ for $j\in I$, which are clearly orthogonal. Let $\mu$ be the probability measure on $k$-dimensional subspaces of $\mb{C}^d$ obtained by taking the span of $k$ uniform random elements in $\{\mbf{w}_1,\ldots,\mbf{w}_\ell\}$.

For each $g \in G$, write
\[
\mbf{u}_g = (\sang{g\mbf{v},\mbf{w}_1},\ldots,\sang{g\mbf{v},\mbf{w}_\ell})
\]
and
\[
\mbf{v}'
= (\snorm{\proj_{V_1}\mbf{v}}_2, \ldots, \snorm{\proj_{V_\ell}\mbf{v}}_2).
\]
Given $S\subseteq[\ell]$, let $\on{proj}_S$ take the projection of an $\ell$-dimensional vector down to that subset of coordinates. We have
\begin{align*}
\int\sup_{g\in G}\snorm{\proj_W(g\mbf{v})}_2^2d\mu(W) &= \frac{1}{\binom{\ell}{k}}\sum_{S\in\binom{[\ell]}{k}}\sup_{g\in G}\snorm{\on{proj}_S(\mbf{u}_g)}_2^2\\
&\le\frac{1}{\binom{\ell}{k}}\sum_{S\in\binom{[\ell]}{k}}\sum_{j\in S}(v_j')^2\\
&=\frac{k}{\ell}\sum_{j=1}^\ell (v_j')^2\le\frac{k}{\ell}\snorm{\mbf{v}}_2^2.
\end{align*}
The first equality follows by the definition of $\mu$, the subsequent inequality follows by $|\sang{g\mbf{v},\mbf{w}_j}|\le v_j'$, and the last line is by direct computation and orthogonality of the $V_j$.
\end{proof}

We next reduce the irreducible case to the primitive case. We first collect a few facts proved in \cite{Gre20} regarding systems of imprimitivity.

\begin{lemma}[{\cite[Section~2]{Gre20}}]\label{lem:system-of-imprimitivity}
Let $G\leqslant\msf{U}(\mb{C}^d)$ be irreducible but imprimitive. Consider a system imprimitivity
\[\mb{C}^d = \bigoplus_{j=1}^{d_1}V_j\]
with $d_1$ maximal over all such systems of primitivity. Let $H = \{g\in G: gV_1 = V_1\}$ and choose $\gamma_1,\ldots,\gamma_{d_1}$ such that $\gamma_jV_1 = V_j$. Then the following hold:
\begin{enumerate}[\qquad 1.]
    \item The $V_j$ are orthogonal and have the same dimension, and $G$ acts transitively on them.
    \item $H$ has primitive action on $V_1$ (i.e. the representation of $H$ on $V_1$ is primitive).
    \item $\gamma_1,\ldots,\gamma_{d_1}$ form a complete set of left coset representatives for $H$ in $G$.
    \item For each $g\in G$ there is $\sigma_g\in\mf{S}_{d_1}$ so that $\gamma_{\sigma_g(j)}^{-1}g\gamma_j\in H$ for all $j \in [d_1]$ (i.e., $\sigma_g$ records how $g$ permutes $\{V_1, \ldots, V_{d_1}\}$).
\end{enumerate}
\end{lemma}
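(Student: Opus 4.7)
The plan is to verify the four conclusions in turn, with orthogonality in part~1 being the technical crux where unitarity enters essentially; the other parts will follow from the maximality of $d_1$ combined with Maschke's theorem and orbit-stabilizer reasoning.

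First I would handle transitivity and equal dimensions: the group $G$ acts on $\{V_1, \ldots, V_{d_1}\}$ by permutation, so each orbit yields a $G$-invariant subspace (the sum of the $V_j$ over that orbit); irreducibility forces a single orbit, hence transitivity, and then since each $g$ gives a linear bijection between the blocks, all $V_j$ have the same dimension. The main obstacle I anticipate is orthogonality. My approach is to form $A := \sum_{j=1}^{d_1} \pi_j$, where $\pi_j$ is orthogonal projection onto $V_j$. Since each $g \in G$ is unitary and sends $V_j$ to some other $V_{j'}$, conjugation gives $g \pi_j g^{-1} = \pi_{j'}$, so $A$ commutes with $G$; Schur's lemma makes $A$ a scalar, and tracing forces $A = I$. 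Expanding $I = A^2$ and taking the trace yields $\sum_{j \neq k} \operatorname{tr}(\pi_j \pi_k) = 0$, but each summand equals $\|\pi_k \pi_j\|_F^2 \ge 0$, so all must vanish; this gives $V_j \perp V_k$.

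For primitivity of $H$ on $V_1$, I would argue by contradiction using maximality of $d_1$. Suppose $H|_{V_1}$ had a system of imprimitivity $V_1 = U_1 \oplus \cdots \oplus U_r$ with $r \ge 2$. Then $\{\gamma_j U_i : j \in [d_1], i \in [r]\}$ is a direct-sum decomposition of $\mb{C}^d$, and $G$ permutes these subspaces: for $g \in G$, writing $j' = \sigma_g(j)$ where $\sigma_g$ is the permutation induced by $gV_j = V_{\sigma_g(j)}$, one has $g\gamma_j U_i = \gamma_{j'}(\gamma_{j'}^{-1} g \gamma_j) U_i$, and the element $\gamma_{j'}^{-1} g \gamma_j$ fixes $V_1$, hence lies in $H$ and permutes the $U_i$. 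This produces a system of imprimitivity of $\mb{C}^d$ with $r d_1 > d_1$ blocks, contradicting maximality.

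Finally, for the coset-representative statement and the definition of $\sigma_g$, orbit-stabilizer applied to the transitive $G$-action on $\{V_j\}$ with stabilizer $H$ gives $[G:H] = d_1$; since the $\gamma_j V_1 = V_j$ are distinct, the $\gamma_j$ lie in distinct left $H$-cosets, so they form a complete set of representatives. The permutation $\sigma_g$ used above satisfies $g\gamma_j V_1 = V_{\sigma_g(j)} = \gamma_{\sigma_g(j)} V_1$, whence $\gamma_{\sigma_g(j)}^{-1} g \gamma_j$ stabilizes $V_1$ and therefore lies in $H$, as required.
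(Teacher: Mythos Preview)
The paper does not prove this lemma; it is stated with attribution to \cite[Section~2]{Gre20} and used as a black box. So there is no in-paper proof to compare against.

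Your argument is correct and self-contained. The orthogonality step via $A=\sum_j \pi_j$, Schur's lemma, and the trace identity $\operatorname{tr}(\pi_j\pi_k)=\|\pi_k\pi_j\|_F^2\ge 0$ is clean and is the standard way to extract orthogonality from unitarity plus irreducibility. The refinement-of-blocks argument for primitivity of $H$ on $V_1$ is exactly the right use of maximality of $d_1$, and your orbit--stabilizer derivation of parts 3 and 4 is fine. One minor point of presentation: you invoke $\sigma_g$ (and hence the $\gamma_j$) inside the proof of part~2 before formally establishing parts~3--4, but since $\sigma_g$ is defined directly by $gV_j=V_{\sigma_g(j)}$ and the $\gamma_j$ are given in the hypotheses, there is no circularity. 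It would be slightly cleaner to record transitivity first, then note that the $\gamma_j$ in the hypothesis exist precisely by transitivity, and only then run the refinement argument.
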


Now we are ready to prove \cref{prop:reduction-prim}, which recall says that for all $k\le d/2$,
\[
f^{\on{irred}}_k(d)\le\max_{d_1d_2=d}\bigl(\min\bigl\{f^{\on{prim}}_{\lceil k/d_1\rceil}(d_2),f^{\on{alt}}_k(d_1)+\mbm{1}_{k\ge d_1}\bigr\}\bigr).
\]

\begin{proof}[Proof of \cref{prop:reduction-prim}]
Let $G\leqslant\msf{U}(\mb{C}^d)$ be irreducible but imprimitive. Consider a system of imprimitivity
\[\mb{C}^d = \bigoplus_{j=1}^{d_1}V_j\]
with $d_1$ maximal among all systems of imprimitivity. By \cref{lem:system-of-imprimitivity}, the spaces $V_j$ are orthogonal and all the $\dim V_j$ are equal. Let $d_2 = \dim V_1$, so that $d_1d_2 = d$. Furthermore, $H = \{g\in G: gV_1 = V_1\}$ acts primitively on $V_1$, that $G$ acts transitively on the $V_j$, and that there are $\gamma_1,\ldots,\gamma_{d_1}$ so that $\gamma_jV_1 = V_j$ which form a complete set of left coset representatives for $H$ in $G$. For each $g\in G$ we have some $\sigma_g\in\mf{S}_{d_1}$ so that $\gamma_{\sigma_g(j)}^{-1}g\gamma_j\in H$ for all $j\in[d_1]$. Define $h(g,j) = \gamma_{\sigma_g(j)}^{-1}g\gamma_j$.

Let $\mbf{v}\in\mb{C}^d$. There is a unique orthogonal decomposition
\[\mbf{v} = \sum_{j=1}^{d_1}\gamma_j\mbf{v}_j\]
where $\mbf{v}_j\in V_1$ for all $j\in[d_1]$. We have
\[g\mbf{v} = \sum_{j=1}^{d_1}g\gamma_j\mbf{v}_j = \sum_{j=1}^{d_1}\gamma_jh(g,\sigma_g^{-1}(j))\mbf{v}_{\sigma_g^{-1}(j)}.\]
Finally, if
\[\mbf{w} = \sum_{j=1}^{d_1}\lambda_j\gamma_j\mbf{x}\]
for some $\bm{\lambda} = (\lambda_1, \ldots, \lambda_{d_1}) \in\mb{C}^{d_1}$ and $\mbf{x}\in V_1$ then we see from the above and orthogonality that
\[\sang{g\mbf{v},\mbf{w}} = \sum_{j=1}^{d_1}\lambda_j\sang{h(g,\sigma_g^{-1}(j))\mbf{v}_{\sigma_g^{-1}(j)},\mbf{x}}.\]

Now we return to the situation at hand: we need to choose a $k$-dimensional space with a good projection for our transitive set. Consider the map $\psi: V_1\times\mb{C}^{d_1}\to\mb{C}^d$ given by
\[\psi(\mbf{x},\bm{\lambda}) = \sum_{j=1}^{d_1}\lambda_j\gamma_j\mbf{x}.\]
It clearly maps the pair of unit spheres into the unit sphere. Given probability measures $\mu_1$ on $\on{Gr}_{\mb{C}}(k_1,V_1)$ and $\mu_2$ on $\on{Gr}_{\mb{C}}(k_2,\mb{C}^{d_1})$, we define the pushforward measure $\mu$ on $\on{Gr}_{\mb{C}}(k_1k_2,d)$ by taking the image of these two subspaces under $\psi$.
Equivalently, suppose $\mu_1^\ast$ samples a unitary basis $\mbf{x}_1,\ldots,\mbf{x}_{k_1}$ of a subspace of $V_1$ and $\mu_2^\ast$ samples a unitary basis $\bm{\lambda}_1,\ldots,\bm{\lambda}_{k_2}$ of a subspace of $\mb{C}^{d_1}$, then $\mu$ samples the subspace of $\mbf{C}^d$ with basis $\{\psi(\mbf{x}_i,\bm{\lambda}_j): i\in[k_1], j\in[k_2]\}$. It is easy to check this basis is in fact unitary.

Next, we choose $\mu_1$ and $\mu_2$ based on the sizes of $d_1$ and $d_2$.

First let $k_1 = \lceil k/d_1\rceil\le d_2$ (as $k\le d/2$) and $k_2 = d_1$. We let $\mu_1$ be the measure guaranteed by \cref{def:k-dimensional-f} so that
\[\int\sup_{h\in H}\snorm{\proj_W(h\mbf{u})}_2^2d\mu_1(W)\le f^{\on{prim}}_{k_1}(d_2)^2\snorm{\mbf{u}}_2^2\]
for all $\mbf{u}\in V_1$ and let $\mu_2$ be the atom on the space $\mb{C}^{d_1}$ in $\on{Gr}_{\mb{C}}(d_1,d_1)$. Let $\mu$ be the $\psi$-pushforward of $(\mu_1, \mu_2)$ as described earlier. We find
\begin{align*}
\int\sup_{g\in G}\snorm{\proj_W(g\mbf{v})}_2^2d\mu(W) &= \int\sup_{g\in G}\sum_{\ell=1}^{k_1}\sum_{j=1}^{d_1}|\sang{g\mbf{v},\psi(\mbf{x}_\ell,\mbf{e}_j)}|^2d\mu_1^\ast(\mbf{x}_1,\ldots,\mbf{x}_{k_1})\\
&= \int\sup_{g\in G}\sum_{\ell=1}^{k_1}\sum_{j=1}^{d_1}|\sang{h(g,\sigma_g^{-1}(j))\mbf{v}_{\sigma_g^{-1}(j)},\mbf{x}_\ell}|^2d\mu_1^\ast(\mbf{x}_1,\ldots,\mbf{x}_{k_1})\\
&= \int\sup_{g\in G}\sum_{\ell=1}^{k_1}\sum_{j=1}^{d_1}|\sang{h(g,j)\mbf{v}_j,\mbf{x}_\ell}|^2d\mu_1^\ast(\mbf{x}_1,\ldots,\mbf{x}_{k_1})\\
&\le\sum_{j=1}^{d_1}\int\sup_{g\in G}\sum_{\ell=1}^{k_1}|\sang{h(g,j)\mbf{v}_j,\mbf{x}_\ell}|^2d\mu_1^\ast(\mbf{x}_1,\ldots,\mbf{x}_{k_1})\\
&\le\sum_{j=1}^{d_1}\int\sup_{h\in H}\sum_{\ell=1}^{k_1}|\sang{h\mbf{v}_j,\mbf{x}_\ell}|^2d\mu_1^\ast(\mbf{x}_1,\ldots,\mbf{x}_{k_1})\\
&\le\sum_{j=1}^{d_1}f^{\on{prim}}_{k_1}(d_2)^2\snorm{\mbf{v}_j}_2^2 = f^{\on{prim}}_{k_1}(d_2)^2\snorm{\mbf{v}}_2^2.
\end{align*}
The last equality is by orthogonality of $V_1,\ldots,V_{d_1}$ and unitarity of $\gamma_j$ for $j\in[d_1]$.

Now suppose that $k < d_1$. Let $k_1 = 1$ and $k_2 = k$. Choose an arbitrary unit vector $\mbf{x} \in V_1$ and $\mu_1$ be an atom on $\on{Gr}_{\mb{C}}(1,V_1)$ supported on the line $\mb{C}\mbf{x}$. Let $\mu_2$ be guaranteed by \cref{def:alternative-f} so that
\[\int\sup_{\mbf{u}\in\on{Dom}(\mbf{w})}\sum_{\ell=1}^k|\sang{\mbf{u},\bm{\lambda}_\ell}|^2d\mu_2^*(\bm{\lambda}_1,\ldots,\bm{\lambda}_k)\le f_k^{\on{alt}}(d_1)^2\snorm{\mbf{w}}_2^2\]
for all $\mbf{w}\in V_1$. 
Let $\mu$ be the $\psi$-pushforward of $(\mu_1, \mu_2)$ as described earlier. 
We find
\begin{align*}
\int\sup_{g\in G}\snorm{\proj_W(g\mbf{v})}_2^2d\mu(W) &= \int\sup_{g\in G}\sum_{\ell=1}^k|\sang{g\mbf{v},\mbf{w}_\ell}|^2d\mu^\ast(\mbf{w}_1,\ldots,\mbf{w}_k)\\
&= \int\sup_{g\in G}\sum_{\ell=1}^k|\sang{g\mbf{v},\psi(\mbf{x},\bm{\lambda}_\ell)}|^2d\mu_2^\ast(\bm{\lambda}_1,\ldots,\bm{\lambda}_k)\\
&=\int\sup_{g\in G}\sum_{\ell=1}^k\bigg|\sum_{j=1}^{d_1}\lambda_{\ell,j}\sang{h(g,\sigma_g^{-1}(j))\mbf{v}_{\sigma_g^{-1}(j)},\mbf{x}}\bigg|^2d\mu_2^\ast(\bm{\lambda}_1,\ldots,\bm{\lambda}_k)\\
&\le\int\sup_{\mbf{u}\in\on{Dom}(\mbf{y})}\sum_{\ell=1}^k|\sang{\mbf{u},\bm{\lambda}_\ell}|^2d\mu_2^\ast(\bm{\lambda}_1,\ldots,\bm{\lambda}_k),
\end{align*}
where $\mbf{y}$ has coordinates $y_j = \sup_{h\in H}|\sang{h\mbf{v}_j,\mbf{x}}|$ for $j\in[d_1]$. We immediately deduce
\begin{align*}
\int\sup_{g\in G}\snorm{\proj_W(g\mbf{v})}_2^2d\mu(W)&\le\int\sup_{\mbf{u}\in\on{Dom}(\mbf{y})}\sum_{\ell=1}^k|\sang{\mbf{u},\bm{\lambda}_\ell}|^2d\mu_2^\ast(\bm{\lambda}_1,\ldots,\bm{\lambda}_k)\\
&\le f_k^{\on{alt}}(d_1)^2\snorm{\mbf{y}}_2^2\le f_k^{\on{alt}}(d_1)^2\sum_{j=1}^{d_1}\snorm{\mbf{v}_j}_2^2 = f_k^{\on{alt}}(d_1)^2\snorm{\mbf{v}}_2^2.
\end{align*}
Note that the above constructed measures in both cases are independent of $\mbf{v}$. The second construction is only valid when $k < d_1$. Therefore since the $f$ values are clearly bounded by $1$, we have an upper bound of
\[f^{\on{prim}}_k(d)\le\max_{d_1d_2=d}(\min(f^{\on{prim}}_{\lceil k/d_1\rceil}(d_2),f^{\on{alt}}_k(d_1)+\mbm{1}_{k\ge d_1})),\]
as claimed.
\end{proof}

\section{Permutation groups}\label{sec:permutation}
In this section, we establish upper bounds for $f^{\on{sym}}_k(d)$ and $f^{\on{alt}}_k(d)$, extending the previous construction \cite[Section~3]{Gre20} for $k=1$.

A useful high dimensional intuition is that, for small $k$, a random $k$-dimensional subspace of $\mb{R}^d$ has the property that \emph{all} its unit vectors have distribution of coordinate magnitudes similar to that of a random Gaussian vector.

We first need the existence of a large dimension subspace of $\mb{R}^d$ with certain delocalization properties. We encode this through the following norm.
\begin{definition}\label{def:strange-norm}
Given $\mbf{v}\in\mb{R}^d$, let
\[\snorm{\mbf{v}}_T^2 = \sup_{\emptyset\subsetneq S\subseteq[d]}\log^4(2d/|S|)\sum_{j\in S}v_j^2\]
and let
\[T^\ast = \{\mbf{t}\in\mb{R}^d: |\sang{\mbf{t},\mbf{w}}|\le 1 \text{ whenever } \snorm{\mbf{w}}_T\le 1\}.\]

\end{definition}
\begin{remark}
Note that $\snorm{\cdot}_T$ is a norm as it can be represented as a supremum of seminorms. Hence
\[\snorm{\mbf{w}}_T = \sup_{t\in T^\ast}|\sang{\mbf{t},\mbf{w}}|.\]
\end{remark}

We next recall a classical lemma regarding the concentration of norms on Gaussian space (see e.g. \cite{LT91}); we provide a short proof for convenience.
\begin{lemma}\label{lem:concentration}
There is an absolute constant $C > 0$ so that for all $p\ge 1$, a Gaussian random vector $\mbf{w}\sim\mc{N}(0,I_d)$ satisfies
\[(\mb{E}_{w_1+\cdots+w_d=0}\snorm{\mbf{w}}_T^p)^{1/p}\le (\mb{E}\snorm{\mbf{w}}_T^p)^{1/p}\le\mb{E}\snorm{\mbf{w}}_T + C\sqrt{p}\sup_{\mbf{t}\in T^\ast}\snorm{\mbf{t}}_2.\]
\end{lemma}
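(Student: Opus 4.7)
The plan is to handle the two inequalities separately, with the first being a Jensen-type conditioning argument and the second being standard Gaussian Lipschitz concentration applied to the norm $\snorm{\cdot}_T$.

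For the second inequality, I would first observe that $F(\mbf{w}) := \snorm{\mbf{w}}_T = \sup_{\mbf{t}\in T^\ast}\sang{\mbf{t},\mbf{w}}$ is an $L$-Lipschitz function on $\mb{R}^d$ with $L = \sup_{\mbf{t}\in T^\ast}\snorm{\mbf{t}}_2$, since for any $\mbf{v},\mbf{w}$,
\[
|F(\mbf{v})-F(\mbf{w})| \le \sup_{\mbf{t}\in T^\ast}|\sang{\mbf{t},\mbf{v}-\mbf{w}}| \le L\snorm{\mbf{v}-\mbf{w}}_2.
\]
Borell's Gaussian concentration inequality then gives that $F(\mbf{w})-\mb{E}F(\mbf{w})$ is sub-Gaussian with parameter $L$, so its $p$-th moment is bounded by $C\sqrt{p}\,L$ for an absolute constant $C$. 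The second inequality then follows from Minkowski:
\[
(\mb{E}F(\mbf{w})^p)^{1/p} \le \mb{E}F(\mbf{w}) + (\mb{E}|F(\mbf{w})-\mb{E}F(\mbf{w})|^p)^{1/p} \le \mb{E}F(\mbf{w}) + C\sqrt{p}\,L.
\]

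For the first inequality, I would decompose $\mbf{w}\sim\mc{N}(0,I_d)$ as $\mbf{w} = \mbf{w}^\perp + (s/d)\mbf{1}$, where $s = w_1 + \cdots + w_d$ and $\mbf{w}^\perp$ is the projection of $\mbf{w}$ onto the hyperplane $\{s=0\}$. A standard fact is that $\mbf{w}^\perp$ and $s$ are independent Gaussians, and the conditional law of $\mbf{w}$ given $s=0$ is exactly that of $\mbf{w}^\perp$. Thus it suffices to show, for every deterministic $\mbf{y}$ in the hyperplane,
\[
\snorm{\mbf{y}}_T^p \le \mb{E}_s\snorm{\mbf{y}+(s/d)\mbf{1}}_T^p.
\]
The function $t\mapsto\snorm{\mbf{y}+t\mbf{1}}_T$ is convex (as the composition of a norm with an affine map), and raising a nonnegative convex function to power $p\ge 1$ preserves convexity, so $t\mapsto\snorm{\mbf{y}+t\mbf{1}}_T^p$ is convex. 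Since $s$ has mean $0$, Jensen's inequality yields the desired bound.

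I do not expect any serious obstacle here: both steps are completely standard once the norm is written as a supremum of linear functionals on $T^\ast$. The only mild subtlety is verifying that conditioning on the linear constraint $w_1 + \cdots + w_d = 0$ really reduces to integrating $\mbf{w}^\perp$ and applying Jensen in the $\mbf{1}$-direction; this is where the mean-zero property of the Gaussian marginal $s$ is used essentially.
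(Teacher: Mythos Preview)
Your proposal is correct and follows essentially the same argument as the paper: the first inequality via the orthogonal decomposition $\mbf{w}=\mbf{w}^\perp+(s/d)\mbf{1}$ with independence and Jensen in the $\mbf{1}$-direction, and the second via the Lipschitz bound $L=\sup_{\mbf{t}\in T^\ast}\snorm{\mbf{t}}_2$, Gaussian concentration, sub-Gaussian moment bounds, and Minkowski. The only cosmetic difference is that the paper phrases the Jensen step as conditioning on $\mbf{w}^\perp$ and averaging out the independent scalar Gaussian, whereas you fix $\mbf{y}$ and apply one-dimensional Jensen directly; these are the same argument.
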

\begin{proof}
For first inequality note that $\mbf{w}\sim \mc{N}(0,I_d)$ can be written as $\mbf{w'} + G\mbf{1}$ where $\mbf{w'}$ is drawn from $\mc{N}(0,I_d)$ conditioned on having coordinate sum zero and $G \in \mc{N}(0,1)$ is independent of $\mbf{w'}$. Then by convexity note that 
\[(\mb{E}\snorm{\mbf{w}}_T^p)^{1/p} = (\mb{E}\snorm{\mbf{w'} + G\mbf{1}}_T^p)^{1/p}\ge (\mb{E}_{\mbf{w'}}\snorm{\mb{E}[\mbf{w'} + \mbf{v'}|\mbf{w'}]}_T^p)^{1/p} = (\mb{E}_{w_1+\cdots+w_d=0}\snorm{\mbf{w}}_T^p)^{1/p}.\]
To prove the second inequality first note that 
\[\snorm{\mbf{w}}_T-\snorm{\mbf{v}}_T\le\snorm{\mbf{w}-\mbf{v}}_T = \sup_{\mbf{t}\in T*}|\sang{\mbf{t},\mbf{w}-\mbf{v}}|\le \snorm{\mbf{w}-\mbf{v}}_2\sup_{t\in T*}\snorm{\mbf{t}}_2.\]
Therefore if $L = \sup_{\mbf{t}\in T^\ast}\snorm{\mbf{t}}_2$ then $\mbf{w}\mapsto\snorm{\mbf{w}}_T$ is an $L$-Lipschitz function with respect to Euclidean distance. Therefore by Gaussian concentration for Lipschitz functions (see e.g. \cite[p.~125]{BLM13}) we have that 
\[\mb{P}[|\snorm{\mbf{w}}_T-\mb{E}[\snorm{\mbf{w}}_T]|\ge t]\le 2\exp(-ct^2/L^2)\]
where $c$ is an absolute constant. Using standard moment bounds for sub-Gaussian random variables (see e.g.~\cite[Proposition~2.5.2]{Ver18}), we find that 
\[(\mb{E}|\snorm{\mbf{w}}_T-\mb{E}\snorm{\mbf{w}}_T|^p)^{1/p}\le C\sqrt{p}\sup_{\mbf{t}\in T^\ast}\snorm{\mbf{t}}_2\]
for an absolute constant $C > 0$. Finally, Minkowski's inequality implies that 
\[(\mb{E}\snorm{\mbf{w}}_T^p)^{1/p}\le \mb{E}\snorm{\mbf{w}}_T+ (\mb{E}|\snorm{\mbf{w}}_T-\mb{E}\snorm{\mbf{w}}_T|^p)^{1/p}\]
and therefore the result follows.
\end{proof}

We now prove an upper bound for $\mb{E}[\snorm{\mbf{w}}_T]$.
\begin{lemma}\label{lem:expectation-strange-norm}
A Gaussian random vector $\mbf{w} \sim \mc{N}(0,I_d)$ satisfies $\mb{E}\snorm{\mbf{w}}_T\lesssim\sqrt{d}$.
\end{lemma}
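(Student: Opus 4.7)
The plan is to reduce to showing $\mb{E}\snorm{\mbf{w}}_T^2 \lesssim d$, after which Jensen's inequality (applied to the concave square root) yields the desired first-moment bound. Writing $w^\succ_1 \ge \cdots \ge w^\succ_d$ for the sorted absolute values of the coordinates of $\mbf{w}$ and $T_s := \sum_{i=1}^s (w^\succ_i)^2$, the supremum over $S$ of size $s$ in the definition of $\snorm{\cdot}_T$ is attained at the indices carrying the top $s$ magnitudes, so
\[
\snorm{\mbf{w}}_T^2 \;=\; \sup_{s \in [d]} \log^4(2d/s)\, T_s.
\]

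The key step is a short telescoping observation: since $\log(2d/s) \le \log(2d/i)$ whenever $i \le s$, we have
\[
\log^4(2d/s)\, T_s \;=\; \sum_{i=1}^s \log^4(2d/s)\,(w^\succ_i)^2 \;\le\; \sum_{i=1}^s \log^4(2d/i)\,(w^\succ_i)^2 \;\le\; \sum_{i=1}^d \log^4(2d/i)\,(w^\succ_i)^2,
\]
the last inequality holding because all summands are nonnegative. This pointwise estimate eliminates the supremum at no cost, so after taking expectations it suffices to prove
\[
\sum_{i=1}^d \log^4(2d/i)\, \mb{E}[(w^\succ_i)^2] \;\lesssim\; d.
\]

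For this I would invoke the standard Gaussian order-statistic estimate $\mb{E}[(w^\succ_i)^2] \lesssim \log(2d/i)$, which follows from the union bound $\mb{P}(w^\succ_i > t) \le \binom{d}{i}(2 e^{-t^2/2})^i$ and integration of tails. Plugging in,
\[
\mb{E}\snorm{\mbf{w}}_T^2 \;\lesssim\; \sum_{i=1}^d \log^5(2d/i),
\]
and the substitution $u = i/d$ combined with the elementary fact that $\int_0^1 \log^5(2/u)\, du$ is finite shows the right-hand side is $\lesssim d$, completing the proof.

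I do not anticipate a genuine obstacle here. The argument is essentially a rearrangement that decouples the supremum over $S$ in the definition of $\snorm{\cdot}_T$ into a deterministic $\log^4$-weighted sum of squared order statistics, reducing matters to a textbook moment bound for Gaussian maxima. The only minor subtlety is correctly routing the logarithmic weights, which is handled cleanly by sorting the coordinates and exploiting the monotonicity of $\log(2d/\cdot)$.
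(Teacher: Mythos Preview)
Your proposal is correct and follows essentially the same route as the paper: both reduce to bounding $\mb{E}\snorm{\mbf{w}}_T^2$, replace the supremum by the weighted sum $\sum_{i=1}^d \log^4(2d/i)(w_i^\succ)^2$ via the same monotonicity observation, invoke the tail bound $\mb{E}[(w_i^\succ)^2]\lesssim\log(2d/i)$, and finish with the integral $\int_0^1\log^5(2/u)\,du<\infty$. Your write-up is in fact slightly more explicit than the paper's about why the supremum can be dropped.
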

\begin{proof}
Recall $\mbf{w}_i^\succ$ from \cref{def:decreasing-form}.
We have
\begin{align*}
\mb{E}(w_i^\succ)^2 &= \int_0^\infty\mb{P}[w_i^\succ\ge\sqrt{t}]dt\le\int_0^\infty\min\bigg(1,\binom{d}{i}(2e^{-t/2})^i\bigg)dt\\
&\le\int_0^\infty\min(1,(2de^{1-t/2}/i)^i)dt\lesssim\log(2d/i).
\end{align*}
Therefore
\begin{align*}
(\mb{E}\snorm{\mbf{w}}_T)^2&\le\mb{E}\snorm{\mbf{w}}_T^2\le\sum_{i=1}^d\log^4(2d/i)(w_i^\succ)^2\lesssim\sum_{i=1}^d\log^5(2d/i)\\
&\le d\int_0^1\log(2/x)^5~dx = d\int_0^\infty(y+\log 2)^5e^{-y}~dy\lesssim d.\qedhere
\end{align*}
\end{proof}
We are in position to derive a high-probability version.
\begin{lemma}\label{lem:gaussian-strange-norm}
With probability at least $1 - \exp(-2d/(\log d)^4)$, a standard Gaussian vector $\mbf{w} \sim \mc{N}(0,I_d)$ satisfies $\snorm{\mbf{w}}_T\lesssim\sqrt{d}$. In fact, the same is true after conditioning $\mbf{w}$ to have coordinate sum $0$.
\end{lemma}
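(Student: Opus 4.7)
The plan is to apply the Gaussian concentration inequality of \cref{lem:concentration} at a suitably large moment $p$, combining \cref{lem:expectation-strange-norm} for the mean with an explicit upper bound on $L:=\sup_{\mbf{t}\in T^\ast}\snorm{\mbf{t}}_2$, which plays the role of the Lipschitz constant of $\snorm{\cdot}_T$ with respect to Euclidean distance.

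First I would bound $L$. Directly from \cref{def:strange-norm},
\[
\snorm{\mbf{w}}_T^2 = \sup_{\emptyset\subsetneq S\subseteq[d]}\log^4(2d/|S|)\sum_{j\in S}w_j^2 \;\le\; \log^4(2d)\,\snorm{\mbf{w}}_2^2,
\]
so $\snorm{\mbf{w}}_T\le\log^2(2d)\,\snorm{\mbf{w}}_2$ for every $\mbf{w}\in\mb{R}^d$. In particular any $\mbf{w}$ with $\snorm{\mbf{w}}_2\le 1/\log^2(2d)$ lies in the unit ball of $\snorm{\cdot}_T$, so $|\sang{\mbf{t},\mbf{w}}|\le 1$ for every $\mbf{t}\in T^\ast$; varying $\mbf{w}$ over the Euclidean ball of that radius yields $\snorm{\mbf{t}}_2\le\log^2(2d)$, i.e., $L\lesssim\log^2(2d)$.

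Next, pick $p:=\lceil c\,d/(\log d)^4\rceil$ for a small absolute constant $c>0$. Combining this with \cref{lem:concentration,lem:expectation-strange-norm} and the bound on $L$,
\[
(\mb{E}\snorm{\mbf{w}}_T^p)^{1/p}\;\le\;\mb{E}\snorm{\mbf{w}}_T+C\sqrt{p}\,L\;\lesssim\;\sqrt{d}+\sqrt{c\,d/(\log d)^4}\cdot\log^2(2d)\;\lesssim\;\sqrt{d}
\]
provided $c$ is chosen small enough. Markov's inequality applied to the $p$th moment then gives
\[
\mb{P}\bigl[\snorm{\mbf{w}}_T\ge K\sqrt{d}\bigr]\le (C_0/K)^p\le\exp(-2d/(\log d)^4)
\]
once $K$ is a sufficiently large absolute constant, which is the unconditional statement. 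For the conditioned case, the first inequality in \cref{lem:concentration} bounds the $L^p$-norm of $\snorm{\mbf{w}}_T$ under the conditioning $w_1+\cdots+w_d=0$ by the unconditional $L^p$-norm just computed, so the identical Markov step applies. I do not foresee a real obstacle here; the only mildly delicate point is constant bookkeeping, namely choosing $c$ small enough that $(\mb{E}\snorm{\mbf{w}}_T^p)^{1/p}=O(\sqrt{d})$ and simultaneously $K$ large enough that $(C_0/K)^p$ beats $\exp(-2d/(\log d)^4)$.
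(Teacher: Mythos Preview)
Your proposal is correct and follows essentially the same approach as the paper: bound $\sup_{\mbf{t}\in T^\ast}\snorm{\mbf{t}}_2\le\log^2(2d)$ via the comparison $\snorm{\cdot}_T\le\log^2(2d)\snorm{\cdot}_2$, combine \cref{lem:concentration,lem:expectation-strange-norm} at $p\asymp d/(\log d)^4$, and conclude by Markov. The paper takes $p=d/(\log d)^4$ directly rather than introducing an auxiliary small constant $c$, but the arithmetic is the same, and your duality derivation of the bound on $L$ is an equally valid (and slightly cleaner) variant of the paper's computation.
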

\begin{proof}
Note that if $\mbf{t}\in T^\ast$, then
\[\snorm{\mbf{t}}_2^2 = \snorm{\mbf{t}}_T\bigg|\bigg\langle\mbf{t},\frac{\mbf{t}}{\snorm{\mbf{t}}_T}\bigg\rangle\bigg|\le\snorm{\mbf{t}}_T\le\log^2(2d)\snorm{\mbf{t}}_2.\]
Hence
\[\sup_{\mbf{t}\in T^\ast}\snorm{\mbf{t}}_2\le\log^2(2d).\]
To deduce the claimed bound, note that
\begin{align*}
\mb{P}[\snorm{\mbf{w}}_T\ge K\sqrt{d}]&\le (K\sqrt{d})^{-p}\mb{E}[\snorm{\mbf{w}}_T^p]
\\
&\le(K\sqrt{d})^{-p}(\mb{E}\snorm{\mbf{w}}_T+C\sqrt{p}\sup_{\mbf{t}\in T^\ast}\snorm{\mbf{t}}_2)^p\\
&\le(K\sqrt{d})^{-p}(C'\sqrt{d}+C'\sqrt{p}\log^2(2d))^p
\end{align*}
for appropriate absolute constants $C,C' > 0$, using \cref{lem:concentration,lem:expectation-strange-norm} and the above inequality. Letting $p = d/(\log d)^4$ and $K > 0$ be a sufficiently large absolute constant yields
\[\mb{P}[\snorm{\mbf{w}}_T\ge K\sqrt{d}]\le\exp(-2p),\]
as desired. The same holds is we condition on sum $0$, using the moment bound for the conditional variable derived in \cref{lem:concentration} instead.
\end{proof}

\begin{lemma}\label{lem:subspace-strange-norm}
There is a $\lceil d/(\log d)^4\rceil$-dimensional subspace of the hyperplane $\mbf{1}^\perp$ in $\mb{R}^d$ such that each of its unit vectors $\mbf{v}$ satisfies
\[\snorm{\mbf{v}}_T\lesssim 1.\]
\end{lemma}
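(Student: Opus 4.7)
The plan is to realize the subspace probabilistically as the column span of a random Gaussian matrix. Set $k := \lceil d/(\log d)^4\rceil$ and let $W$ be a random $d \times k$ matrix whose columns are i.i.d.\ copies of $\mbf{w}\sim\mc{N}(0,I_d)$ conditioned on $\sum_i w_i = 0$. By construction $W\mb{R}^k \subseteq \mbf{1}^\perp$, and every unit vector in the column span has the form $W\mbf{a}/\snorm{W\mbf{a}}_2$ for some unit $\mbf{a} \in \mb{S}(\mb{R}^k)$. For each such fixed $\mbf{a}$, the vector $W\mbf{a}$ has the same Gaussian distribution as a single column of $W$, so \cref{lem:gaussian-strange-norm} applies directly to it.

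The main step is an $\epsilon$-net argument. Fix a $(1/2)$-net $\mc{N} \subseteq \mb{S}(\mb{R}^k)$ with $|\mc{N}| \le 5^k$. For each $\mbf{a} \in \mc{N}$, \cref{lem:gaussian-strange-norm} yields $\snorm{W\mbf{a}}_T \le K\sqrt{d}$ for some absolute constant $K$, with failure probability at most $\exp(-2d/(\log d)^4)$. The union bound over $\mc{N}$ gives total failure probability at most
\[
5^k\exp\!\bigl(-2d/(\log d)^4\bigr) \le \exp\!\bigl((\log 5 - 2)d/(\log d)^4 + O(1)\bigr) = o(1),
\]
since $\log 5 < 2$. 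To extend from the net to the full sphere I use the standard contraction trick: writing $\mbf{a} = \mbf{a}_0 + \mbf{b}$ with $\mbf{a}_0 \in \mc{N}$ and $\snorm{\mbf{b}}_2 \le 1/2$, the triangle inequality and positive homogeneity for $\snorm{\cdot}_T$ yield $\sup_{\snorm{\mbf{a}}_2 = 1}\snorm{W\mbf{a}}_T \le 2\sup_{\mbf{a}\in\mc{N}}\snorm{W\mbf{a}}_T \le 2K\sqrt{d}$.

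Separately, I need a uniform lower bound on $\snorm{W\mbf{a}}_2$ to normalize. Since $k = o(d)$, standard concentration for the smallest singular value of a tall Gaussian matrix (e.g.\ a direct $\chi^2$-tail plus net argument: for each fixed unit $\mbf{a}$, $\snorm{W\mbf{a}}_2^2$ is a $\chi^2_{d-1}$ variable, tail $\exp(-cd)$, union-bounded over the same net and extended) gives $\sigma_{\min}(W) \ge c\sqrt{d}$ with probability at least $1 - \exp(-cd)$. Both this and the upper bound on $\sup_{\snorm{\mbf{a}}_2 = 1}\snorm{W\mbf{a}}_T$ hold simultaneously with positive probability; fix any $W$ realizing both. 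Its column span is then a $k$-dimensional subspace of $\mbf{1}^\perp$, and any unit vector $\mbf{v} = W\mbf{a}/\snorm{W\mbf{a}}_2$ in it satisfies
\[
\snorm{\mbf{v}}_T = \frac{\snorm{W\mbf{a}}_T}{\snorm{W\mbf{a}}_2} \le \frac{2K\sqrt{d}}{c\sqrt{d}} \lesssim 1,
\]
as required.

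The main obstacle is verifying that the union bound closes, i.e.\ that the log covering number per dimension of the sphere (here $\log 5$) is strictly smaller than the tail-rate constant in \cref{lem:gaussian-strange-norm} (here $2$). With the $(1/2)$-net this holds on the nose; if more slack is needed one can enlarge $K$ in \cref{lem:gaussian-strange-norm} to boost the tail constant past $2$, since the Markov step in that lemma's proof is monotone in $K$. Everything else is routine $\epsilon$-net machinery for Gaussian matrices.
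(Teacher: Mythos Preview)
Your proof is correct and follows essentially the same approach as the paper: a random subspace, an $\epsilon$-net on $\mb{S}(\mb{R}^k)$, \cref{lem:gaussian-strange-norm} at each net point, and a union bound closing because $\log 5 < 2$ (the paper uses a $6^k$ net and $\log 6 < 2$). The only cosmetic difference is that the paper starts from a random \emph{orthonormal} basis of a uniform $k$-plane in $\mbf{1}^\perp$---so vectors are already unit length, and the distribution of $U\mbf{v}$ is related back to the Gaussian via independence of norm and direction---whereas you take Gaussian columns directly and normalize afterward using a standard smallest-singular-value bound; both routes are equally valid.
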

\begin{proof}
We can assume $d$ is sufficiently large. Let $k = \lceil d/(\log d)^4\rceil$,
and consider a uniformly random $k$-dimensional subspace $W$ of $\mbf{1}^\perp$. Let $U$ be a $d \times k$ matrix whose columns form an orthonormal basis of $W$, chosen uniformly at random.

By a standard volume packing argument (e.g., see \cite[Lemma~4.3]{Rud14}), there exists $\mc{N} \subset \mb{S}(\mb{R}^k)$ with $\sabs{\mc{N}} \le 6^k$ such that for every $\mbf{v}\in\mb{S}(\mb{R}^k)$ there is $\mbf{v}'\in\mc{N}$ so that $\snorm{\mbf{v}-\mbf{v}'}_2\le 1/2$. Thus if $\mbf{u}$ is a unit vector in the direction of $\mbf{v}-\mbf{v}'$, we have
\[\snorm{U\mbf{v}}_T\le\snorm{U\mbf{v}'}_T + \snorm{U(\mbf{v}-\mbf{v}')}_T\le\snorm{U\mbf{v}'}_T + \frac{1}{2}\snorm{U\mbf{u}}_T.\]
We deduce
\[\sup_{\mbf{v}\in\mb{S}(\mb{R}^k)}\snorm{U\mbf{v}}_T\le\sup_{\mbf{v}'\in\mc{N}}\snorm{U\mbf{v}'}_T + \frac{1}{2}\sup_{\mbf{u}\in\mb{S}(\mb{R}^k)}\snorm{U\mbf{u}}_T\]
and thus
\[\sup_{\mbf{v}\in\mb{S}(\mb{R}^k)}\snorm{U\mbf{v}}_T\le 2\sup_{\mbf{v}'\in\mc{N}}\snorm{U\mbf{v}'}_T.\]

Now fix some $\mbf{v}\in\mc{N}$. Note the distribution of $U\mbf{v}$ is uniform among unit vectors in $\mbf{1}^\perp$ since $W$ was chosen uniformly. Now note that for any constant $C$ we have that 
\[\mb{P}[\snorm{U\mbf{v}}_T\ge C] = \mb{P}[\snorm{\mbf{G}/\snorm{\mbf{G}}_2}_T\ge C]\]
where $\mbf{G}\sim N(0,I_d-(\mbf{1}^T\mbf{1})/d)$. Now since $\mbf{G}/\snorm{\mbf{G}}_2$ and $\snorm{\mbf{G}}_2$ are independent we have that
\begin{align*}
\mb{P}[\snorm{\mbf{G}/\snorm{\mbf{G}}_2}_T\ge C] &= \mb{P}[\snorm{\mbf{G}}_2\le 2\sqrt{d}]^{-1}\mb{P}[\snorm{\mbf{G}/\snorm{\mbf{G}}_2}_T\ge C \text{ and } \snorm{\mbf{G}}_2\le 2\sqrt{d}]\\
&\le 2\mb{P}[\snorm{\mbf{G}/\snorm{\mbf{G}}_2}_T\ge C \text{ and } \snorm{\mbf{G}}_2\le 2\sqrt{d}]\\
&\le 2\mb{P}[\snorm{\mbf{G}/\snorm{\mbf{G}}_2}_T\ge 2C\sqrt{d}].
\end{align*}
By \cref{lem:gaussian-strange-norm}, the last expression is at most $2\exp(-2d/(\log d)^4)$. The result follows upon taking the union bound over at most $6^k$ vectors in $\mc{N}$, since $6 < e^2$.
\end{proof}

Finally, we will need a form of Selberg's inequality (see \cite[Chapter~27,~Theorem~1]{Dav00}).
\begin{lemma}\label{lem:selberg}
For $\mbf{v}_1,\ldots,\mbf{v}_m\in \mb{C}^d$ we have that
\[\sup_{\mbf{w}\in\mb{S}(\mb{C}^d)}\sum_{i=1}^m|\sang{\mbf{w},\mbf{v}_i}|^2\le \sup_{i\in[m]}\sum_{j=1}^m|\sang{\mbf{v}_i,\mbf{v}_j}|.\]
\end{lemma}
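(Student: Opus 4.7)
The plan is to recognize the left-hand side as the squared operator norm of the linear map $\mbf{w} \mapsto (\sang{\mbf{w},\mbf{v}_i})_{i=1}^m$, and then bound the largest eigenvalue of the associated Gram matrix by its maximum absolute row sum. Concretely, let $V$ be the $d \times m$ matrix whose $i$-th column is $\mbf{v}_i$. Then for any unit $\mbf{w}\in \mb{S}(\mb{C}^d)$,
\[
\sum_{i=1}^m |\sang{\mbf{w},\mbf{v}_i}|^2 = \snorm{V^\ast \mbf{w}}_2^2,
\]
so the supremum on the left-hand side equals $\snorm{V^\ast}_{\mathrm{op}}^2 = \snorm{V V^\ast}_{\mathrm{op}}$. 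A standard identity gives $\snorm{V V^\ast}_{\mathrm{op}} = \snorm{V^\ast V}_{\mathrm{op}}$, and $V^\ast V$ is exactly the $m \times m$ Gram matrix $G$ with entries $G_{ij} = \sang{\mbf{v}_i,\mbf{v}_j}$. Thus it suffices to prove
\[
\lambda_{\max}(G) \le \sup_{i\in[m]} \sum_{j=1}^m |G_{ij}|.
\]

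For this, I would give the usual short eigenvector argument (a form of Gershgorin / the Schur test). Since $G$ is positive semidefinite and Hermitian, let $\mbf{x}\in\mb{C}^m$ be an eigenvector of $G$ with eigenvalue $\lambda = \lambda_{\max}(G)$, normalized so that $\snorm{\mbf{x}}_\infty = 1$, and let $i_0\in[m]$ be an index attaining $|x_{i_0}| = 1$. Comparing the $i_0$-th coordinates of $G\mbf{x} = \lambda \mbf{x}$ yields
\[
\lambda = |\lambda x_{i_0}| = \bigg|\sum_{j=1}^m G_{i_0 j} x_j\bigg| \le \sum_{j=1}^m |G_{i_0 j}|\,|x_j| \le \sum_{j=1}^m |G_{i_0 j}| \le \sup_{i\in[m]}\sum_{j=1}^m |\sang{\mbf{v}_i,\mbf{v}_j}|,
\]
which is exactly the desired bound. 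Combining this with the operator-norm reduction gives the inequality.

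There is essentially no obstacle here: the identification of the supremum with $\lambda_{\max}(G)$ is just the variational characterization of the operator norm of $V^\ast$, and the eigenvalue bound is a one-line Gershgorin-style estimate using that the entry with maximum modulus in the eigenvector controls the others. The only minor thing to be careful about is the inner-product convention (linear in which slot), but since $|\sang{\mbf{v}_i,\mbf{v}_j}|$ on the right-hand side is symmetric under conjugation, this does not affect the statement, and the Gram-matrix formulation goes through in either convention.
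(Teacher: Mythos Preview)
Your proof is correct. The paper does not actually give a proof of this lemma; it merely cites Davenport's \emph{Multiplicative Number Theory} (Chapter~27, Theorem~1) for this form of Selberg's inequality. Your argument is the standard self-contained one: identify the left-hand side with $\lambda_{\max}(V^\ast V)$ via the variational characterization of the operator norm, and then bound the top eigenvalue of the Hermitian Gram matrix by its maximum absolute row sum using the Gershgorin-type eigenvector trick. This is exactly the usual proof of Selberg's inequality (equivalently, the Schur test applied to the Gram matrix), and nothing in it is problematic. The only cosmetic point is that your matrix $G$ clashes with the paper's use of $G$ for a group, so if this were to be inserted verbatim you would want to rename it; mathematically the argument is complete.
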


Now we prove \cref{prop:permutation}, which recall says that for $k\le d/(\log d)^5$, one has
\[
f_k^{\on{sym}}(d)\le f_k^{\on{alt}}(d)\lesssim 1/\sqrt{\log(d/k)}.
\]
The first inequality is immediate as the set of allowable $\mu$'s in the definition of $f_k^{\on{alt}}$ is a subset of those of $f_k^{\on{sym}}$. So we just need to prove the second inequality.

\begin{proof}[Proof of \cref{prop:permutation}]
Let $\mbf{e}_i$ be the $i$-th coordinate vector. For each $j$ with $k\le 2^j/(\log 2^j)^4\le d$, we apply \cref{lem:subspace-strange-norm} to the space $V_j = \on{span}_{\mb{R}}\{\mbf{e}_1,\ldots,\mbf{e}_{2^j}\}$. Here the $T$-norm is defined with respect to this $2^j$-dimensional space. In particular, there exists a $k$-dimensional (real) subspace of the orthogonal complement of $\mbf{e}_1+\cdots+\mbf{e}_{2^j}$ within $V_j$, call it $W_j$, so that every unit vector $\mbf{u}\in W_j$ satisfies
\[\sum_{i\in S}u_i^2\lesssim\frac{1}{\log^4(2^{j+1}/|S|)}\]
for every nonempty $S\subseteq[2^j]$. Let $V_j' = \on{span}_{\mb{C}} V_j$ and $W_j' = \on{span}_{\mb{C}} W_j$. We immediately deduce that every unit vector $\mbf{u}\in W_j'$ satisfies
\begin{equation}\label{eq:delocalized}
\sum_{i\in S}|u_i|^2\lesssim\frac{1}{\log^4(2^{j+1}/|S|)}
\end{equation}
since we can write it as $\mbf{u} = \alpha\mbf{u}_r + \beta\sqrt{-1}\mbf{u}_c$ where $\mbf{u}_r,\mbf{u}_c\in W_j$ are real unit vectors and $\alpha,\beta\in\mb{R}$ satisfy $\alpha^2+\beta^2 = 1$.

Now we construct our random subspace as follows: let $W = W_j$ where $j$ is a random integer uniformly chosen from
\[
J = \{\lceil\log_2(2k\log^4 d)\rceil,\ldots, \lfloor \log_2 d \rfloor\}.
\]
Let $\mu$ be the probability measure on $\Gr_\mb{C}(k, n)$ that gives $W$.

For every $\mbf{v}\in\mb{S}(\mb{C}^d)$, we have
\[\sup_{\gamma\in\Gamma_d}\snorm{\proj_W(\gamma\mbf{v})}_2 = \sup_{\gamma\in\Gamma_d}\sup_{\mbf{w}\in\mb{S}(W)}|\sang{\gamma\mbf{v},\mbf{w}}|= \sup_{\mbf{w}\in\mb{S}(W)}\sang{\mbf{v}^\succ,\mbf{w}^\succ}.\]
Therefore
\[\int\sup_{\gamma\in\Gamma_d}\snorm{\proj_W(\gamma\mbf{v})}_2^2d\mu(W) = \frac{1}{|J|}\sum_{j\in J}\sup_{\gamma\in\Gamma_d}\snorm{\proj_{W_j}(\gamma\mbf{v})}_2^2 = \frac{1}{|J|}\sum_{j\in J}\sup_{\mbf{w}\in\mb{S}(W_j)}\sang{\mbf{v}^\succ,\mbf{w}^\succ}^2.\]
Let $\mbf{w}_j'\in\mb{S}(W_j)$ be such that
\[\sup_{\mbf{w}\in\mb{S}(W_j)}\sang{\mbf{v}^\succ,\mbf{w}^\succ}^2 = \sang{\mbf{v}^\succ,(\mbf{w}_j')^\succ}^2,\]
which exists by compactness. For $i,j\in J$ with $i\ge j$, we have
\[|\sang{(\mbf{w}_i')^\succ,(\mbf{w}_j')^\succ}|\le \snorm{\proj_{V_i}((\mbf{w}_j')^\succ)}_2 \lesssim \frac{1}{\log^2(2^{i+1}/2^j)}.\]
The first inequality follows from $\mbf{w}_j'\in V_j$, which implies $(\mbf{w}_j')^\succ\in V_j$. The second follows from \cref{eq:delocalized} applied to $\mbf{w}_j'$ and $S$ a subset of $[2^i]$ composed of the $2^j$ largest magnitude coordinates of $\mbf{w}_j'$.

Applying \cref{lem:selberg}, we deduce
\begin{align*}
\int\sup_{\gamma\in\Gamma_d}\snorm{\proj_W(\gamma\mbf{v})}_2^2d\mu(W)&= \frac{1}{|J|}\sum_{j\in J}\sang{\mbf{v}^\succ,(\mbf{w}_j')^\succ}^2\le\sup_{i\in J}\frac{1}{|J|}\sum_{j\in J}|\sang{(\mbf{w}_i')^\succ,(\mbf{w}_j')^\succ}|\\
&\lesssim\frac{1}{|J|}\bigg(\sum_{j\in J, j\ge i}\frac{1}{\log^2(2^{j+1}/2^i)} + \sum_{j\in J, j < i}\frac{1}{\log^2(2^{i+1}/2^j)}\bigg)\lesssim\frac{1}{|J|}.
\end{align*}
This $\mu$ thus shows that $f_k^{\on{alt}}(d) \lesssim 1/\sqrt{\log(d/k)}$.
\end{proof}

\section{Primitive representations}\label{sec:primitive}
We now turn to the case of bounding $f_k^{\on{prim}}(d)$. 
First, we show that if the group $G \leqslant \msf{U}(\mb{R}^d)$ is sufficiently small, then a random basis achieves the necessary bound for $f_k^{\on{prim}}(d)$. This is a minor modification of \cite[Proposition~4.1]{Gre20}.

\begin{proposition}\label{thm:few-points}
Let $G\leqslant\msf{U}(\mb{C}^d)$. Suppose that $[G:Z_d\cap G]\le e^{d/\log d}$, where $Z_d :=\{\lambda I_d: |\lambda|=1\}$. Then for $k\in[d]$ there exists a probability measure $\mu$ on $\on{Gr}_\mb{C}(k,d)$ such that
\[\int\sup_{g\in G}\snorm{\proj_W(g\mbf{v})}_2^2d\mu(W)\lesssim  \frac{1}{\log(2d/k)}\snorm{\mbf{v}}_2^2\]
for all $\mbf{v}\in\mb{C}^d$.
\end{proposition}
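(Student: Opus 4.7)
The plan is to take $\mu$ to be the $\msf{U}(\mb{C}^d)$-invariant (Haar) probability measure on $\on{Gr}_\mb{C}(k,d)$ and to prove the bound by combining sharp concentration of $\snorm{\proj_W\mbf{u}}_2^2$ for fixed unit $\mbf{u}$ with a union bound over the orbit of $\mbf{v}$. The first structural step is to note that $\snorm{\proj_W(\lambda g\mbf{v})}_2 = \snorm{\proj_W(g\mbf{v})}_2$ for every $\lambda \in Z_d \cap G$, so the supremum $\sup_{g\in G}\snorm{\proj_W(g\mbf{v})}_2$ is really a maximum over at most $N := [G:Z_d\cap G] \le e^{d/\log d}$ unit vectors, one per left coset representative.

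Next I would invoke the distributional identity that for fixed unit $\mbf{u}$ and Haar-random $W \in \on{Gr}_\mb{C}(k,d)$, the random variable $\snorm{\proj_W\mbf{u}}_2^2$ has the same law as $Y/(Y+Z)$ with independent $Y\sim\on{Gamma}(k,1)$ and $Z\sim\on{Gamma}(d-k,1)$, i.e., the complex $\on{Beta}(k,d-k)$ distribution with mean $k/d$. A one-variable Chernoff optimization then yields, for $t \ge k/d$,
\[
\mb{P}\bigl(\snorm{\proj_W\mbf{u}}_2^2 \ge t\bigr) \le \Bigl(\frac{dt}{k}\Bigr)^{k}\Bigl(\frac{d(1-t)}{d-k}\Bigr)^{d-k} \le \exp\bigl(k + k\log(dt/k) - dt\bigr),
\]
where the second inequality uses $1-x\le e^{-x}$.

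To finish, I would first dispose of the case $k \ge d/2$ via the trivial bound $\snorm{\proj_W(g\mbf{v})}_2 \le 1$, since $1/\log(2d/k) \asymp 1$ in that regime. Otherwise I would set $t_0 := C/\log(2d/k)$ for a sufficiently large absolute constant $C$; writing $\alpha := dt_0/k$, the Chernoff exponent rewrites as $k(\alpha - 1 - \log \alpha)$. A short computation using $k \le d/2$ shows that $\alpha \ge 6$ once $C$ is large, so $\alpha - 1 - \log\alpha \ge \alpha/2$, and the exponent is at least $dt_0/2 = Cd/(2\log(2d/k)) \ge 2d/\log d$. The union bound then gives $\mb{P}(\sup_{g\in G}\snorm{\proj_W(g\mbf{v})}_2^2 \ge t_0) \le N\,e^{-2d/\log d} \le e^{-d/\log d}$. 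Moreover, the derivative $k/t - d$ of the Chernoff exponent is $\le -d/2$ for $t \ge t_0$ (using $t_0 \ge 2k/d$), so the tail decays at rate $\ge d/2$ past $t_0$, and integrating yields
\[
\int \sup_{g\in G}\snorm{\proj_W(g\mbf{v})}_2^2 \, d\mu(W) \le t_0 + \int_{t_0}^1 \min\bigl(1,\,N\,\mb{P}(\snorm{\proj_W\mbf{u}}_2^2 \ge t)\bigr)\,dt \lesssim t_0 \asymp \frac{1}{\log(2d/k)}.
\]

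The main obstacle I anticipate is organizing the Chernoff tail calculation so that the $d/\log d$ price of the union bound is absorbed by the KL-divergence rate uniformly across the range $1 \le k \le d/2$. This reduces to the scalar inequality $\alpha - 1 - \log \alpha \ge \alpha/2$ for $\alpha \ge 6$, combined with the elementary estimate $k\log(2d/k) \lesssim d$ on $(0, d/2]$, which together force $C$ to be a large but absolute constant.
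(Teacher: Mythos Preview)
Your proposal is correct and follows essentially the same approach as the paper: take $\mu$ to be the Haar measure on $\on{Gr}_{\mb{C}}(k,d)$, reduce the supremum to at most $e^{d/\log d}$ coset representatives, and beat the union bound with a concentration inequality for $\snorm{\proj_W\mbf{u}}_2$. The only difference is the concentration tool: the paper invokes L\'evy concentration on the sphere for the $1$-Lipschitz map $\mbf{v}'\mapsto\snorm{\proj_{W'}\mbf{v}'}_2$, whereas you compute an explicit Chernoff bound for the $\on{Beta}(k,d-k)$ law of $\snorm{\proj_W\mbf{u}}_2^2$; both yield a tail of order $e^{-2d/\log d}$ at the relevant threshold, and the remaining arithmetic is equivalent.
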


\begin{proof}
We let $\mu$ be the uniform measure on $\Gr_\mb{C}(k,d)$.
By scaling, we may assume that $\mbf{v}$ is a unit vector. Furthermore let $W'$ be the subspace generated by the first $k$ coordinate vectors $\mbf{e}_1,\ldots,\mbf{e}_k$. 
Note that
\begin{align*}
\mb{P}_W\bigg[\sup_{g\in G}\snorm{\proj_W(g\mbf{v})}_2\ge t\bigg]&\le e^{d/\log d}\mb{P}_W[\snorm{\proj_W(\mbf{v})}_2\ge t]\\
&\le e^{d/\log d}\mb{P}_{\mbf{v}'\in\mb{S}(\mb{C}^d)}[\snorm{\proj_{W'}(\mbf{v}')}_2\ge t]
\end{align*}
using a union bound and then orthogonal invariance. Now note that 
\[\mb{E}[\snorm{\proj_{W'}(\mbf{v}')}_2]^2\le\mb{E}[\snorm{\proj_{W'}(\mbf{v}')}_2^2] = k/d\]
and that $\snorm{\proj_{W'}(\mbf{v'})}$ is a $1$-Lipschitz function of $\mbf{v}'$. Therefore by L\'evy concentration on the sphere we have that 
\[\mb{P}_{\mbf{v}'\in\mb{S}(\mb{C}^d)}[\snorm{\proj_{W'}(\mbf{v}')}_2\ge\sqrt{k/d}+C/\sqrt{\log d}]\le e^{-2d/\log d}\]
for a suitably large absolute constant $C$. Finally, using $\sqrt{k/d}\lesssim 1/\sqrt{\log(2d/k)}$ and using the bound $\snorm{\proj_{W'}(\mbf{v'})}_2\le 1$, the desired result follows immediately.
\end{proof}

We need the following key group theoretic result from Green \cite{Gre20}, which in turn builds on ideas from Collins' work on optimal bounds for Jordan's theorem~\cite{Col07}.
Roughly, it says that if $[G:Z_d\cap G]$ is large then $G$ has a large normal alternating subgroup. The first part of the following theorem is \cite[Proposition~4.2]{Gre20}, while the rest is implicit in the proof of \cite[Proposition~1.11]{Gre20}.

\begin{theorem}[{\cite[Section~4]{Gre20}}]\label{thm:group-theory}
Let $G\leqslant\msf{U}(\mb{C}^d)$ be primitive and suppose that $[G:Z_d\cap G]\ge e^{d/\log d}$. If $d$ is sufficiently large then all of the following hold.
\begin{enumerate}
    \item $G$ has a normal subgroup isomorphic to the alternating group $A_n$ for some $n\gtrsim d/(\log d)^4$.
    \item $G$ has a subgroup of index at most $2$ of the form $A_n\times H$, with the same $n$.
    \item The resulting representation $\rho: A_n\times H\hookrightarrow G\hookrightarrow\msf{U}(\mb{C}^d)$ decomposes into irreducible representations, at least one of which (call it $\rho_1$) is of the form $\rho_1\simeq\psi\otimes\psi'$, where $\psi'$ is an irreducible representation of $H$ and $\psi$ is the representation of $A_n$ acting via permutation of coordinates on $\{\mbf{z}\in\mb{C}^n: z_1+\cdots+z_n=0\}$.
\end{enumerate}
\end{theorem}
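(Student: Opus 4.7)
The statement is a structure theorem for primitive complex linear groups whose image modulo the center is exponentially large in $d$. My plan is to follow the classical strategy of analyzing primitive linear groups via the generalized Fitting subgroup, building on Jordan's theorem and its refinements due to Collins~\cite{Col07}, together with the classification of finite simple groups.

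The first step is to study the generalized Fitting subgroup $F^*(G) = E(G)\cdot F(G)$. By primitivity, any abelian normal subgroup of $G$ must lie in $Z_d \cap G$, since otherwise its joint eigenspaces would furnish a system of imprimitivity. This restricts $F(G)/(Z_d \cap G)$ to be of extraspecial (or ``symplectic'') type, but such a structure forces $[G:Z_d\cap G]$ to be polynomial in $d$, contradicting our hypothesis once $d$ is large. Similarly, irreducibility forces $E(G)$ to be a single quasisimple component $L$, since multiple components would produce a tensor product representation whose dimension relates polynomially to the orders of the pieces, again contradicting the bound.

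The heart of the argument is then to apply the classification of finite simple groups to identify $L/Z(L)$. The quantitative input is a lower bound on the minimal faithful projective degree in each family: for finite groups of Lie type, Landazuri--Seitz-type bounds give $|L/Z(L)| \le d^{O(1)}$, and only finitely many sporadic groups arise, so in either case the hypothesis $[G:Z_d\cap G] \ge e^{d/\log d}$ is violated once $d$ is large. The only remaining possibility is $L/Z(L) \cong A_n$ for some $n$. Using that the minimum faithful projective degree of $A_n$ is $n-1$ (realized by the standard representation on $\{z_1+\cdots+z_n = 0\}$) together with $|A_n| = n!/2$, the lower bound on $[G:Z_d\cap G]$ translates into $n \gtrsim d/(\log d)^4$, giving claim (1). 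For claims (2) and (3), Clifford theory takes over: because $|\on{Out}(A_n)| = 2$ for $n$ large, the subgroup $G_0 = A_n \cdot C_G(A_n)$ has index at most $2$ in $G$ and takes the form $A_n \times H$; restricting the $d$-dimensional representation to $A_n \times H$ and decomposing via Clifford's theorem into factors $\psi_i \otimes \psi_i'$, a count of low-dimensional faithful irreducibles of $A_n$ (via the known minimal-degree bounds away from the standard representation) forces, for $n$ sufficiently large, at least one $\psi_i$ to be the standard representation, which is the required $\psi$.

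The main obstacle I anticipate is the quantitative bookkeeping: matching the exponential hypothesis $e^{d/\log d}$ against minimal-degree bounds for each family of finite simple groups with enough sharpness to extract the specific factor $(\log d)^4$ in $n \gtrsim d/(\log d)^4$. This is exactly where Collins' optimal version of Jordan's theorem is designed to be invoked. Absent that tight control, one would only obtain a bound of the form $n \gtrsim d^{1-o(1)}$, which would still be nontrivial but would weaken the final result of \cref{thm:complex}.
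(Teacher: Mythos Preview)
The paper does not prove \cref{thm:group-theory}: it is quoted as a black box from Green~\cite{Gre20}, with part~(1) being \cite[Proposition~4.2]{Gre20} and parts~(2)--(3) extracted from the proof of \cite[Proposition~1.11]{Gre20}. So there is no proof in the present paper to compare your proposal against.

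That said, your sketch is an accurate outline of how Green's argument proceeds: analyze $F^*(G)$, use primitivity to force abelian normal subgroups into $Z_d\cap G$, bound the contribution of $F(G)$ via the symplectic-type structure, reduce $E(G)$ to a single quasisimple component, and then invoke CFSG with Landazuri--Seitz-type minimal-degree bounds to eliminate Lie-type and sporadic possibilities, leaving only $A_n$. The passage to~(2) and~(3) via $|\on{Out}(A_n)|\le 2$ and Clifford theory is also correct in spirit. Two minor caveats: your claim that ``irreducibility forces $E(G)$ to be a single quasisimple component'' is not quite right---primitivity gives a central product of components acting via a tensor product, and it is the size hypothesis that then rules out multiple factors; and the precise exponent $4$ in $n\gtrsim d/(\log d)^4$ requires more careful bookkeeping than your sketch indicates (it is not simply read off from $n!\ge e^{d/\log d}$ but involves bounding the dimensions of the non-standard irreducibles of $A_n$ appearing in the restriction). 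You correctly flag this last point as the delicate step.
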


We are now in position to prove \cref{prop:f-primitive}, which recall says that there is an absolute constant $c > 0$ such that for every $k\le cd/(\log d)^4$ we have 
\[
f_k^{\on{prim}}(d)\lesssim\sup_{d'\ge cd/(\log d)^4}f^{\on{alt}}_k(d').
\]
The proof mirrors that of \cite[Proposition~1.11]{Gre20}, but we correct an error of Green (\cite[p.~20]{Gre20}) involving an incorrect orthogonality identity. This erroneous deduction is replaced by an argument which still allows one to reduce the primitive case to the alternating case.

\begin{proof}[Proof of \cref{prop:f-primitive}]
We may assume $d$ is sufficiently large.
If $[G:Z_d\cap G]\le e^{d/\log d}$, then the result follows by \cref{thm:few-points}.
So we can assume $[G:Z_d\cap G]\ge e^{d/\log d}$, and thus by \cref{thm:group-theory}, $G$ has a normal subgroup isomorphic to $A_n$ for some $n\gtrsim d/(\log d)^4$ and that $G$ has a subgroup of index at most $2$ which is of the form $A_n\times H$. If the index is $2$, let $\tau$ be the nontrivial right coset representative of $A_n\times H$ in $G$ (otherwise just let $\tau$ be the identity). Note that
\begin{align*}
\sup_{g\in G}\snorm{\proj_W(g\mbf{v})}_2^2&\le\sup_{g\in A_n\times H}\snorm{\proj_W(g\mbf{v})}_2^2 + \sup_{g\in A_n\times H}\snorm{\proj_W(g\tau\mbf{v})}_2^2,
\end{align*}
so it is easy to see that, up to losing a constant factor, we may reduce to studying groups of the form $G = A_n\times H$ where $n\gtrsim d/(\log d)^4$ (but note that the representation may no longer be primitive, or even irreducible).

Now \cref{thm:group-theory} shows that the representation $\rho: A_n\times H\to\msf{U}(\mb{C}^d)$ coming from this setup has an irreducible component of the form $\rho_1\simeq\psi\otimes\psi'$, where $\psi'$ is an irreducible representation of $H$ and $\psi$ is the representation of $A_n$ acting via permutation of coordinates on $\{\mbf{z}\in\mb{C}^n: z_1+\cdots+z_n=0\}$.

Note that $\dim \rho_1 \ge \dim \psi = n-1\gtrsim d/(\log d)^4$, so $\dim\rho_1 \ge k$ provided that $c>0$ is sufficiently small. We will choose a $k$-dimensional subspace of the irreducible component $\rho_1$.

We explicitly present this situation as follows. Let $V'$ be the space acted on by $\psi'$ (unitarily). Consider $V = \mbf{1}^\perp\subseteq\mb{C}^n$, and consider the spaces $V\otimes V'\subseteq\mb{C}^n\otimes V'$, which has a natural unitary structure given by the tensor product. Note $\psi$ acts on $V$ by permutation of coordinates when represented in $\mb{C}^n$. Every vector in $V\otimes V'$ is spanned by pure tensors $\mbf{v}\otimes\mbf{v}'$ where $\mbf{v}$ has zero coordinate sum, and $\rho_1((a,h))$ acts by $\psi(a)\otimes\psi'(h)$ on pure tensors. In fact, we can extend this action to all of $\mb{C}^n\otimes V'$ in the natural way (and the resulting representation is isomorphic to a direct sum of $\rho_1$ and $\on{triv}_{A_n}\otimes\psi'$). At this point, the analysis will be similar to that in the proof of \cref{prop:reduction-prim}.

Let $\nu$ be the measure on $\on{Gr}_{\mb{C}}(k,n)$ which is guaranteed by \cref{def:alternative-f} (so is supported on subspaces of $V\subseteq\mb{C}^n$) and consider the measure which is supported on a single atom in $\on{Gr}_{\mb{C}}(1,V')$ in the direction of a fixed unit vector $\mbf{x}$. Let $\mu$ be the tensor of these two measures, i.e., if $\nu^\ast$ samples $k$ orthonormal (sum zero) vectors $\mbf{u}_1,\ldots,\mbf{u}_k$ then we choose the subspace with basis $\mbf{u}_1\otimes\mbf{x},\ldots,\mbf{u}_k\otimes\mbf{x}$.

Now consider some $\mbf{v}$ in the space $V\otimes V'\subseteq\mb{C}^n\otimes V'$, and write it as
\[\mbf{v} = \sum_{j=1}^n\mbf{e}_j\otimes\mbf{v}_j'\]
where the $\mbf{e}_j$ is the $j$-th coordinate vector of $\mb{C}^n$. In fact, the $\mbf{v}_j'$ must add up to $\mbf{0}\in V'$. We see that
\[\snorm{\mbf{v}}_2^2 = \sum_{j=1}^n\snorm{\mbf{v}_j'}_2^2.\]
We have
\begin{align*}
\int\sup_{g\in A_n\times H}&\snorm{\proj_W(g\mbf{v})}_2^2d\mu(W)\\
&= \int\sup_{a\in A_n,h\in H}\sum_{\ell=1}^k\bigg|\bigg\langle\sum_{j=1}^n\psi(a)\mbf{e}_j\otimes\psi'(h)\mbf{v}_j',\mbf{w}_\ell\bigg\rangle\bigg|^2 d\mu^\ast(\mbf{w}_1,\ldots,\mbf{w}_k)\\
&= \int\sup_{a\in A_n,h\in H}\sum_{\ell=1}^k\bigg|\sum_{j=1}^n\sang{\psi(a)\mbf{e}_j,\mbf{u}_\ell}\sang{\psi'(h)\mbf{v}_j',\mbf{x}}\bigg|^2d\nu^\ast(\mbf{u}_1,\ldots,\mbf{u}_k)\\
&\le\int\sup_{\mbf{w}\in\on{Dom}(\mbf{y})}\sum_{\ell=1}^k|\sang{\mbf{w},\mbf{u}_\ell}|^2d\nu^\ast(\mbf{u}_1,\ldots,\mbf{u}_k)\\
&\le f^{\on{alt}}_k(n)^2\snorm{\mbf{y}}_2^2,
\end{align*}
where $\mbf{y}\in\mb{C}^n$ satisfies $y_j = \sup_{h\in H}|\sang{\psi'(h)\mbf{v}_j',\mbf{x}}|$. The first inequality follows by noting that $\sang{\psi(a)\mbf{e}_j,\mbf{u}_\ell}$ as $j$ varies simply records the coordinates of $\mbf{u}_\ell$ in some permutation, and by considering $\mbf{w} = (w_1,\ldots,w_n)$ defined via $w_j = \sang{\psi'(h)\mbf{v}_j',\mbf{x}}$, which is clearly on $\on{Dom}(\mbf{y})$. Now  we see
\[\int\sup_{g\in A_n\times H}\snorm{\proj_W(g\mbf{v})}_2^2d\mu(W)\le f^{\on{alt}}_k(n)^2\snorm{\mbf{y}}_2^2\le f^{\on{alt}}_k(n)^2\sum_{j=1}^n\snorm{\mbf{v}_j'}_2^2 = f^{\on{alt}}_k(n)^2\snorm{\mbf{v}}_2^2.\qedhere\]
\end{proof}

This completes all the components of the proof of \cref{thm:complex}.

\section{Real subspaces}\label{sec:real}
We already proved \cref{thm:complex}, which finds a complex subspace. 
Now we use it to deduce \cref{thm:real}, which gives a real subspace.
We will apply the following version of the restricted invertibility theorem, which is a special case of \cite[Theorem~6]{NY17}. We write $s_1(M) \ge s_2(M) \ge \cdots$ for the singular values of a matrix $M$.

\begin{theorem}[{\cite[Theorem~6]{NY17}}]\label{thm:restricted-invert}
Let $M$ be a real $2k\times 4k$ matrix of rank $2k$. There exists $S\subseteq [4k]$ with $|S| = k$ such that $M_S$, the restriction of $M$ to the columns $S$, satisfies
\[s_k(M_S)\gtrsim \sqrt{\frac{\sum_{j=3k/2}^{4k} s_j(M)^2}{k}}.\]
\end{theorem}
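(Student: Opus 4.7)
Since this is a refined restricted invertibility theorem of Bourgain--Tzafriri type, I would prove it via the barrier-function (potential) method of Batson--Spielman--Srivastava, as refined by Naor--Youssef \cite{NY17}. Let $v_1,\ldots,v_{4k}\in\mb{R}^{2k}$ be the columns of $M$ and set $\sigma^2 := \sum_{j=3k/2}^{4k}s_j(M)^2$. The claim is equivalent to finding $S\subseteq[4k]$ with $|S|=k$ such that $\lambda_k\bigl(\sum_{i\in S}v_iv_i^T\bigr)\gtrsim \sigma^2/k$, since $\lambda_k$ of this rank-$k$ Gram operator equals $s_k(M_S)^2$.

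The backbone of the argument is a greedy column-selection scheme that maintains a lower barrier
\[
\Phi_l(A;l) = \on{tr}\bigl((A-lI)^{-1}\bigr)
\]
for the current Gram matrix $A_t=\sum_{i\in S_t}v_iv_i^T$, starting from $A_0=0$ and $S_0 = \emptyset$. At each of $k$ steps, we choose an index $i \notin S_t$ together with a barrier shift $l \mapsto l + \delta_l$ so that the barrier does not increase after the update $A_t \mapsto A_{t+1} = A_t + v_iv_i^T$. The existence of a valid $i$ at each step is ensured by an averaging argument: using Sherman--Morrison, $\sum_i\bigl[\Phi_l(A+v_iv_i^T;l+\delta_l)-\Phi_l(A;l+\delta_l)\bigr]$ can be expressed as
\[
-\sum_i \frac{v_i^T(A-(l+\delta_l)I)^{-2}v_i}{1+v_i^T(A-(l+\delta_l)I)^{-1}v_i},
\]
which is controlled by $\on{tr}\bigl(MM^T(A-(l+\delta_l)I)^{-2}\bigr)$ and by $\Phi_l(A;l)$ itself. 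Iterating for $k$ steps, the final $l_k$ gives $\lambda_k(A_k)\ge l_k$, hence $s_k(M_S)\ge\sqrt{l_k}$.

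The main obstacle is to calibrate the initial barrier $l_0$ and the shift $\delta_l$ so that $l_k \gtrsim \sigma^2/k$ rather than a weaker quantity like $\|M\|_{HS}^2/(4k)$ (which would include irrelevant contributions from the top singular values). The key idea, due to Naor--Youssef, is to simultaneously run an \emph{upper} barrier $\Phi_u(A;u)=\on{tr}((uI-A)^{-1})$ initialized just above $s_{3k/2}(M)^2$, so that the top $k/2$ singular directions of $M$ are enclosed by the upper barrier throughout the iteration and therefore do not contribute to the lower-barrier averaging: what remains to drive the lower-barrier improvement is precisely the tail sum $\sum_{j\ge 3k/2} s_j(M)^2$. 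The combinatorial balance between the number of selected columns ($k$), the ambient dimension ($2k$), the number of available columns ($4k$), and the cutoff index ($3k/2$) is what makes the arithmetic close and produces the stated bound; getting the constants to work out is the heart of the Naor--Youssef argument.
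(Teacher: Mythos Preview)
The paper does not prove this theorem at all: it is quoted as a black box, stated as ``a special case of \cite[Theorem~6]{NY17}'' and then immediately applied in the proof of \cref{thm:real}. So there is no proof in the paper to compare your proposal against.

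Your sketch is a reasonable outline of the barrier/potential-function machinery underlying modern restricted invertibility results, and you correctly identify the key conceptual point that distinguishes the Naor--Youssef bound from earlier versions: one must arrange the argument so that only the tail singular values $s_{3k/2},\ldots,s_{4k}$ drive the lower bound, rather than the full Hilbert--Schmidt norm. That said, your description of the mechanism (running a simultaneous upper barrier initialized near $s_{3k/2}(M)^2$ to ``enclose'' the top directions) does not quite match how \cite{NY17} actually proceeds; their argument works by first passing to a projection that kills the top singular directions and then running a one-sided barrier, rather than a two-sided BSS-style scheme. If you intend to include a self-contained proof, you should consult \cite{NY17} directly to get the details right; but for the purposes of this paper, simply citing the result as the authors do is entirely appropriate.
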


\begin{proof}[Proof of \cref{thm:real}]
Let $2k\le d/(\log d)^C$, where $C$ is as in \cref{thm:complex}. By embedding $X$ in $\mb{S}(\mb{C}^d)$ and using \cref{thm:complex} we can find a $2k$-dimensional complex subspace $W$ of $\mb{C}^d$ such that 
\[\sup_{\mbf{x}\in X}\snorm{\proj_W\mbf{x}}_2\lesssim 1/\sqrt{\log (d/k)}.\]
Let $\mbf{v}_1,\ldots,\mbf{v}_{2k}$ be a unitary basis for the subspace $W$ and let the matrix with these columns be denoted by $B$. Now consider the matrix $M$ which has $4k$ columns which are $\on{Re}\mbf{v}_1,\ldots,\on{Re}\mbf{v}_{2k}$ and $\on{Im}\mbf{v}_1,\ldots,\on{Im}\mbf{v}_{2k}$. Note that $M$ has $s_{2k}(M)\ge 1/\sqrt{2}$ as any vectors in $\mb{C}^{4k}$ satisfying $iv_j = v_{j+2k}$ have 
$\snorm{M\mbf{v}} =  \snorm{\mbf{v}}/\sqrt{2}$. Therefore by \cref{thm:restricted-invert} one can select $k$ columns such that the matrix $N$ with those $k$ columns satisfies
\[s_k(N)\gtrsim 1.\]
Now consider any unit vector $\mbf{v}$ in the image of $N$. Such a vector can be represented as $\mbf{v} = N\mbf{w}$ where $\snorm{\mbf{w}}\lesssim 1$. It therefore suffices to prove that 
\[\sup_{\mbf{x}\in X,\mbf{w}\in \mb{S}(\mb{R}^k)}|\sang{N\mbf{w}, \mbf{x}}|\lesssim 1/\sqrt{\log (d/k)}.\]
To see this separate $N$ into $N_1$ and $N_2$ where $N_1$ corresponds to columns chosen from the real parts of vectors $\mbf{v}_i$ and the columns are chosen from the complex parts of $\mbf{v}_i$. Let these have $\ell$ and $k-\ell$ columns respectively. Then 
\begin{align*}
\sup_{\mbf{x}\in X,\mbf{w}\in \mb{S}(\mb{R}^k)}|\sang{N\mbf{w}, \mbf{x}}|&\le \sup_{\mbf{x}\in X,\mbf{w}\in \mb{S}(\mb{R}^\ell)}|\sang{N_1\mbf{w}, \mbf{x}}| + \sup_{\mbf{x}\in X,\mbf{w}\in \mb{S}(\mb{R}^{k-\ell})}|\sang{N_2\mbf{w}, \mbf{x}}|\\
&\le 2\sup_{\mbf{x}\in X,\mbf{w}\in \mb{S}(\mb{C}^k)}|\sang{B\mbf{w}, \mbf{x}}|\\
&\lesssim 1/\sqrt{\log (d/k)}.\qedhere
\end{align*}
\end{proof}

\section{Lower Bound}\label{sec:optimality}
Finally, we show a lower bound of $\Omega(1/\sqrt{\log(2d/k)})$, which demonstrates optimality of our results.

\begin{proof}[Proof of \cref{thm:lower-bound}]
We prove the real case; an analogous proof works over $\mb{C}$ by considering a suitably fine discretization of $\Gamma_d$, or we can repeat the proof in \cref{sec:real} to transfer a lower bound from real to complex. 

The claim for $k=1$ was already proved in \cite[\emph{Sharpness} after Theorem 1.3]{Gre20} (see the construction at the beginning of this article right after \cref{thm:green}).
The case $k=1$ implies the result also for $k \le d^{1-c}$ for any constant $c$, since we can project from $W$ onto a arbitrary 1-dimensional subspace of $W$.

So from now on assume $k\ge d^{1/2}$. Consider the action of $G = \mf{S}_d\ltimes(\mb{Z}/2\mb{Z})^d$ on $\mb{R}^d$ by permutation and signing. Let
\[\mbf{a} = \left( \frac{1}{\sqrt{\lfloor k/2 \rfloor+1}},\ldots, \frac{1}{\sqrt{d}},0,\ldots,0\right).\]
Let $X$ be the $G$-orbit of $\mbf{a}/\snorm{\mbf{a}}_2$.

Let $W$ be a $k$-dimensional subspace of $\mb{R}^d$. 
We wish to show $\sup_{\mbf{x} \in X} \snorm{\proj_W \mbf{x}}_2 \gtrsim 1/\sqrt{\log(2d/k)}$.

Let $\mbf{y} = (y_1,\ldots,y_d)$ a uniform random vector in $\mb{S}(W)$. Let $\sigma_i = (\mb{E}y_i^2)^{1/2}$. We have
\begin{equation}\label{eq:sigma-sum}
	\sigma_1^2+\cdots+\sigma_d^2 = \mb{E}[y_1^2 + \cdots + y_d^2] = 1 
\end{equation}
and 
\begin{equation} \label{eq:sigma-max}
\sigma_i^2 = \frac{1}{k} \snorm{\proj_W(\mbf{e}_i)}^2 \le \frac{1}{k}.	
\end{equation}

Without loss of generality, assume that $1/\sqrt{k}\ge\sigma_1\ge\cdots\ge\sigma_d\ge 0$, so that $\sigma_i \le 1/\sqrt{i}$ for each $i$.
We claim that
\[
a_i \ge \sqrt{\frac{2}{3}} \sigma_i \qquad \text{ for all }1 \le i \le d-k/2.
\]
Indeed, for $i\le k$, we have $a_i\ge 1/\sqrt{3k/2} \ge \sqrt{3/2} \sigma_i$.
For $k < i\le d-\lfloor k/2 \rfloor$, we have $a_i = 1/\sqrt{\lfloor k/2 \rfloor + i} \ge   \sigma_i\sqrt{i/(\lfloor k/2 \rfloor + i)} \ge \sqrt{2/3} \sigma_i$.

We have $\mb{E}|y_i| \gtrsim (\mb{E}y_i^2)^{1/2} = \sigma_i$ since $y_i$ is distributed as the first coordinate of a random point on $\sigma_i \sqrt{k} \cdot \mb{S}(\mb{R}^k)$.

Putting everything together, we have
\begin{align*}
\snorm{\mbf{a}}_2 \, \sup_{\mbf{x} \in X} \snorm{\proj_W \mbf{x}}_2
&\ge 
\sup_{g \in G} \snorm{\proj_W g \mbf{a}}
\ge 
\mb{E} \sup_{g \in G} \sang{\mbf{a}, g \mbf{y}} 
\\
&\ge 
\mb{E}\sum_{1\le i\le d}a_i|y_i|
\gtrsim \sum_{i=1}^da_i\sigma_i
\gtrsim \sum_{i=1}^{d-k/2} \sigma_i^2 
\ge \frac{1}{2},
\end{align*}
where the final step uses \cref{eq:sigma-sum,eq:sigma-max}. Thus 
\[
\sup_{\mbf{x} \in X} \snorm{\proj_W \mbf{x}}_2 \gtrsim \frac{1}{\snorm{\mbf{a}}_2} \gtrsim 1/\sqrt{\log(2d/k)}. \qedhere 
\]
\end{proof}

\bibliographystyle{amsplain0.bst}
\bibliography{main.bib}

\providecommand{\bysame}{\leavevmode\hbox to3em{\hrulefill}\thinspace}
\providecommand{\MR}{\relax\ifhmode\unskip\space\fi MR }
\providecommand{\MRhref}[2]{%
  \href{http://www.ams.org/mathscinet-getitem?mr=#1}{#2}
}
\providecommand{\href}[2]{#2}
\begin{thebibliography}{1}

\bibitem{BLM13}
St\'{e}phane Boucheron, G\'{a}bor Lugosi, and Pascal Massart,
  \emph{Concentration inequalities}, Oxford University Press, Oxford, 2013, A
  nonasymptotic theory of independence, With a foreword by Michel Ledoux.

\bibitem{Col07}
Michael~J. Collins, \emph{On {J}ordan's theorem for complex linear groups}, J.
  Group Theory \textbf{10} (2007), 411--423.

\bibitem{Dav00}
Harold Davenport, \emph{Multiplicative number theory}, third ed., Graduate
  Texts in Mathematics, vol.~74, Springer-Verlag, New York, 2000, Revised and
  with a preface by Hugh L. Montgomery.

\bibitem{Gre20}
Ben Green, \emph{On the width of transitive sets: {B}ounds on matrix
  coefficients of finite groups}, Duke Math. J. \textbf{169} (2020), 551--578.

\bibitem{LT91}
Michel Ledoux and Michel Talagrand, \emph{Probability in {B}anach spaces},
  Ergebnisse der Mathematik und ihrer Grenzgebiete (3) [Results in Mathematics
  and Related Areas (3)], vol.~23, Springer-Verlag, Berlin, 1991, Isoperimetry
  and processes.

\bibitem{NY17}
Assaf Naor and Pierre Youssef, \emph{Restricted invertibility revisited}, A
  journey through discrete mathematics, Springer, Cham, 2017, pp.~657--691.

\bibitem{Rud14}
Mark Rudelson, \emph{Recent developments in non-asymptotic theory of random
  matrices}, Modern aspects of random matrix theory, Proc. Sympos. Appl. Math.,
  vol.~72, Amer. Math. Soc., Providence, RI, 2014, pp.~83--120.

\bibitem{Ver18}
Roman Vershynin, \emph{High-dimensional probability}, Cambridge Series in
  Statistical and Probabilistic Mathematics, vol.~47, Cambridge University
  Press, Cambridge, 2018, An introduction with applications in data science,
  With a foreword by Sara van de Geer.

\end{thebibliography}

\end{document}